\def\NAT@def@citea{\def\@citea{\NAT@separator}}
\theoremstyle{plain}
 \newtheorem{thm}{Theorem}[section]
 \newtheorem{cor}[thm]{Corollary}
 \newtheorem{lem}[thm]{Lemma}
 \theoremstyle{definition}
 \newtheorem{defn}[thm]{Definition}
 \theoremstyle{remark}
 \numberwithin{equation}{section}
\tikzstyle{c}=[circl ( 2cm), draw=none, fill=white, text=black]
\tikzstyle{place}=[circle, draw=blue!50,fill=blue!20,thick, inner sep=0pt, minimum size=1.5cm]
\tikzstyle{place1}=[ellipse, draw=blue!50,fill=blue!20,thick, inner sep=0pt, minimum size=0.5cm]                   
\tikzstyle{arrow1}=[thick,<->,>=stealth]
\tikzstyle{arrow2}=[thick,<->,>=stealth]
\tikzstyle{arrowmid}[0.5]=[decoration= {markings, mark=at position #1 with {\arrow{>}}}, postaction={decorate}]
\tikzset{ kb1/.style={postaction={decorate, decoration={markings,mark=at position .5 with {\arrow{>};}}}},
kb2/.style={postaction={decorate, decoration={markings,mark=at position .2 with {\arrow{>};}}}},   }
\tikzset{fontscale/.style = {font=\relsize{#1}}}
\renewcommand*{\@fnsymbol}[1]{\@alph{#1}}
\title{Differentiability of the largest Lyapunov exponent for planar open billiards}
\author {Amal Al Dowais 
 \thanks{Department of Mathematics and Statistics, School of Physics, Mathematics and Computing, University of Western Australia, Perth, WA 6009, Australia  \newline {\textit{\indent ~~ Email address:} \href{mailto: amal.aldowais@research.uwa.edu.au}{amal.aldowais@research.uwa.edu.au}}} $^{  , }$\thanks {Department of Mathematics, College of Science and Arts, Najran University, Najran, Saudi Arabia \newline{\textit{\indent ~~ Email address:} \href{mailto: amalduas@nu.edu.sa}{amalduas@nu.edu.sa}} }
  }
\date{}
\begin{document}










\maketitle

\begin{abstract}
In this paper, we estimate the largest Lyapunov exponent for open billiards in the plane. We show that the largest Lyapunov exponent is differentiable with respect to a billiard deformation.
\end{abstract}

{\small \bf {keywords.}} {\small {Open billiards; Lyapunov exponents; Non-wandering set; Billiard deformation}}

{\small \bf{Mathematics Subject Classification (2010).}} {\small {37D50, 37B10, 37D20, 34D08}}

\section{Introduction}

The stability and instability of a dynamical system can be studied by means of Lyapunov exponents. A dynamical system is considered chaotic if it has a positive Lyapunov exponent. Examples of chaotic systems are the dispersing billiards or so-called Sinai billiards (see \cite{Si1}, \cite{Si2}). Billiards are dynamical systems in which a particle moves with constant speed and hits the billiard’s wall (boundary of the billiard’s domain) according to the law of geometrical optics,\textquotedblleft the angle of incidence equals the angle of reflection”. Open billiards are a particular case of billiards in unbounded domains. The domain is the exterior of finitely many strictly convex compact obstacles satisfying the no-eclipse condition {\bf(H)} of Ikawa \cite{I}: the convex hull of any two obstacles does not intersect with another obstacle; in other words, there does not exist a straight line that intersects more than two obstacles. 
It follows from Sinai \cite{Si1}, \cite{Si2} (see also \cite {PS}) that the non-wandering set of the open billiard map is hyperbolic (i.e. there exist positive and negative Lyapunov exponents). 
Many studies have investigated Lyapunov exponents for billiards (see \cite{Wo}, \cite{Ba},  \cite{Ch2}, \cite{Mar1}, \cite{Mar2}). In this paper, we estimate the largest Lyapunov exponent for open billiard in $\mathbb{R}^2$. We demonstrate that the Lyapunov exponent depends continuously on a parameter $\alpha$ related to a deformation of the billiard as defined in \cite{W2}. Moreover, we prove that the Lyapunov exponent is differentiable with respect to the deformation parameter $\alpha$.

\bigskip

Here we state the main results:

\bigskip

\noindent In the following theorems, we denote the billiard deformation by $K(\alpha)$ where $\alpha \in [0, b]$. See Section \ref{defo} for the precise definition.

\bigskip

\begin{thm} {\normalfont {{\bf(Continuity)}}} 
Let $K(\alpha)$ be a $\mathcal{C}^{4, 2}$ billiard deformation in $\mathbb{R}^2$. Let $\lambda_1(\alpha)$ be the largest Lyapunov exponent for $K(\alpha)$. Then the largest Lyapunov exponent is continuous as a function of $\alpha$.

\label{con.}

\end{thm}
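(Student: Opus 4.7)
The plan is to express the largest Lyapunov exponent as an integral against an invariant Gibbs measure on the non-wandering set, and then to show that every ingredient in that integral varies continuously with $\alpha$. By Oseledets' theorem applied to the hyperbolic billiard map $B_\alpha$ on its non-wandering set $K_\alpha$, for a suitable equilibrium state $\mu_\alpha$ one has
\begin{equation*}
\lambda_1(\alpha) \;=\; \int_{K_\alpha} \log \bigl\| dB_\alpha(x)|_{E^u_\alpha(x)} \bigr\| \, d\mu_\alpha(x),
\end{equation*}
where $E^u_\alpha$ is the one-dimensional unstable subbundle. Since $K_\alpha$ itself depends on $\alpha$, I would first invoke the symbolic coding: under the no-eclipse assumption \textbf{(H)}, $(K_\alpha, B_\alpha)$ is topologically conjugate, via a H\"older homeomorphism $\Phi_\alpha$, to a subshift of finite type $(\Sigma_A,\sigma)$ whose alphabet and transition matrix are determined by the combinatorics of the obstacles and are independent of $\alpha$. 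Pulling the integral back to the fixed base $\Sigma_A$ gives
\begin{equation*}
\lambda_1(\alpha) \;=\; \int_{\Sigma_A} f_\alpha \, d\widetilde\mu_\alpha, \qquad f_\alpha := \log\bigl\| dB_\alpha|_{E^u_\alpha} \bigr\|\circ \Phi_\alpha, \qquad \widetilde\mu_\alpha := (\Phi_\alpha)_{*}\mu_\alpha,
\end{equation*}
so the problem reduces to continuous dependence of $f_\alpha$ and $\widetilde\mu_\alpha$ on a single space that does not move with $\alpha$.

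Next I would establish that each of these factors is continuous in $\alpha$. The regularity class $\mathcal{C}^{4,2}$ of the deformation ensures that $B_\alpha$ and its derivative $dB_\alpha$ depend continuously on $\alpha$ in the $\mathcal{C}^{1}$ topology on any compact piece of the phase space. The unstable subbundle $E^u_\alpha(x)$ is obtained as the unique invariant direction inside an invariant expanding cone field built from the curvatures of the convex fronts; standard hyperbolic perturbation theory then gives that $E^u_\alpha$, and hence the expansion function $\log\|dB_\alpha|_{E^u_\alpha}\|$, depend continuously on $\alpha$ uniformly on $K_\alpha$. Combined with the H\"older dependence of the conjugacy $\Phi_\alpha$ provided by shadowing on a hyperbolic set, this yields $f_\alpha \to f_{\alpha_0}$ in the uniform norm on $\Sigma_A$ as $\alpha \to \alpha_0$. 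For the measure, $\widetilde\mu_\alpha$ is the equilibrium state for a H\"older potential on $(\Sigma_A,\sigma)$, so classical thermodynamic formalism (Bowen--Ruelle) provides weak-$*$ continuity of $\widetilde\mu_\alpha$ with respect to the potential.

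Putting these ingredients together, uniform convergence of $f_\alpha$ combined with weak-$*$ convergence of $\widetilde\mu_\alpha$ yields $\int f_\alpha\, d\widetilde\mu_\alpha \to \int f_{\alpha_0}\, d\widetilde\mu_{\alpha_0}$, which is the desired continuity of $\lambda_1(\alpha)$. The main obstacle I expect is controlling the joint regularity in $\alpha$ and $x$: structural stability alone only gives pointwise continuous conjugacies $\Phi_\alpha$, whereas the thermodynamic argument requires $f_\alpha$ to converge in the $C^0$ (or better, H\"older) norm on the symbolic space, with a uniform H\"older exponent across a neighbourhood of $\alpha_0$. This is where the explicit curvature and expansion formulas available in the planar open billiard setting, together with the $\mathcal{C}^{4,2}$ hypothesis (four derivatives in position, two in the deformation parameter), are essential to turn the qualitative continuity into the quantitative estimates needed to apply the Bowen--Ruelle framework uniformly in $\alpha$.
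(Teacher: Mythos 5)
Your strategy is genuinely different from the paper's, and in its current form it has a gap precisely at the step you yourself flag as the main obstacle. The paper fixes one ergodic $\sigma$-invariant measure $\mu$ on the symbol space $\Sigma$ (which, under condition {\bf(H)}, is the same for all $\alpha$) and transports it to each $M_\alpha$ by the coding $R_\alpha^{-1}$; consequently no continuity of the measure in $\alpha$ is ever needed, and your entire thermodynamic-formalism layer (equilibrium states $\mu_\alpha$, weak-$*$ continuity via Bowen--Ruelle) addresses a question the theorem does not ask. Moreover you never say which potential $\widetilde\mu_\alpha$ is the equilibrium state of, and verifying that this potential varies continuously in the H\"older norm would force you back to exactly the quantitative estimates you have postponed. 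With the measure fixed on $\Sigma$, the whole theorem reduces to showing that the observable $f_\alpha$ (concretely, $\log(1+d_j(\alpha)k_j(\alpha))$ along the coded orbit) converges uniformly as $\alpha \to \alpha_0$; that is the only real content.

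That uniform convergence is the genuine gap. The unstable direction $E^u_\alpha(x)$ --- equivalently the curvature $k_j(\alpha)$ of the propagated unstable front --- depends on the entire backward orbit of $x$, so $\mathcal{C}^1$-continuity of $B_\alpha$ in $\alpha$ on compact pieces of phase space does not by itself give continuity of $\log\|dB_\alpha|_{E^u_\alpha}\|$ uniformly over the non-wandering set; invoking ``standard hyperbolic perturbation theory'' is not a proof here. The paper closes this gap in Section \ref{higher}: the Sinai recursion $k_{j+1} = k_j/(1+d_jk_j) + 2\kappa_{j+1}/\cos\phi_{j+1}$ is differentiated in $\alpha$, the linearized recursion has coefficient $\beta_j = (1+d_jk_j)^{-2} \leq \beta_{\max} < 1$, and summing the resulting geometric series yields bounds $|\partial_\alpha d_j| \leq C_d^{(1)}$ and $|\partial_\alpha k_j| \leq C_k^{(1)}$ uniform in $j$ and $\alpha$ (Corollaries \ref{distance} and \ref{k_j}). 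With these, the mean value theorem gives $|\log(1+d_j(\alpha)k_j(\alpha)) - \log(1+d_j(0)k_j(0))| \leq C|\alpha|$ uniformly in $j$, hence $|\lambda_1^{(m)}(\alpha)-\lambda_1^{(m)}(0)| \leq C|\alpha|$ uniformly in $m$, and (Lipschitz) continuity of $\lambda_1$ follows directly, with a short lemma (Lemma \ref{alpha_tilde{k}}) guaranteeing one full-measure set of codings works for all parameters in a sequence simultaneously. If you want to salvage your route, replace the appeal to perturbation theory by this explicit curvature-contraction estimate; once you have it, the thermodynamic scaffolding becomes superfluous.
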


\bigskip

\begin{thm} {\normalfont {{\bf(Differentiability)}}}
Let $K(\alpha)$ be a $\mathcal{C}^{5,3}$ billiard deformation in $\mathbb{R}^2$. Let $\lambda_1(\alpha)$ be the largest Lyapunov exponent for $K(\alpha)$. Then $\lambda_1 (\alpha)$ is $\mathcal{C}^1$ with respect to $\alpha$.

\label{diff.}

\end{thm}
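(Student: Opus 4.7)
The plan is to upgrade the argument behind Theorem~\ref{con.} by differentiating an integral representation of $\lambda_1(\alpha)$. Denote by $B_\alpha$ the billiard map of $K(\alpha)$, by $M_0(\alpha)$ its non-wandering set, by $E^u_\alpha$ the one-dimensional unstable subbundle over $M_0(\alpha)$, and by $\mu_\alpha$ the natural $B_\alpha$-invariant probability measure used in Theorem~\ref{con.} to compute the largest Lyapunov exponent, so that
\[
\lambda_1(\alpha) \,=\, \int_{M_0(\alpha)} \log \| DB_\alpha|_{E^u_\alpha(x)} \| \, d\mu_\alpha(x).
\]
By structural stability of uniformly hyperbolic sets, for each $\alpha_0 \in [0, b]$ there exist a neighborhood $U$ of $\alpha_0$ and a family of H\"older conjugating homeomorphisms $\Phi_\alpha \colon M_0(\alpha_0) \to M_0(\alpha)$, $\alpha \in U$, satisfying $B_\alpha \circ \Phi_\alpha = \Phi_\alpha \circ B_{\alpha_0}$. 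Setting $\nu_\alpha := (\Phi_\alpha^{-1})_\ast \mu_\alpha$, I rewrite
\[
\lambda_1(\alpha) \,=\, \int_{M_0(\alpha_0)} G(\alpha, y) \, d\nu_\alpha(y), \qquad G(\alpha, y) := \log \| DB_\alpha(\Phi_\alpha y)|_{E^u_\alpha(\Phi_\alpha y)} \|,
\]
so that all the $\alpha$-dependence is packaged into the integrand and the measure on the fixed underlying set $M_0(\alpha_0)$.

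The next step is to verify $\mathcal{C}^1$-dependence of these two pieces separately. For the integrand, the planar setting gives an explicit geometric expression for the unstable expansion rate in terms of the free-flight time between consecutive reflections, the boundary curvature $\kappa_\alpha$ at the reflection point, and the slope of $E^u_\alpha$. Because $\partial K(\alpha)$ is $\mathcal{C}^5$ and depends $\mathcal{C}^3$ on $\alpha$, which is one more degree in each than in Theorem~\ref{con.}, each of these quantities, as well as their composition with $\Phi_\alpha$, depends $\mathcal{C}^1$ on $\alpha$ uniformly over $M_0(\alpha_0)$. Hence $\partial_\alpha G$ exists and is H\"older in $y$, uniformly in $\alpha \in U$.

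The third step is the $\mathcal{C}^1$-dependence of the pulled-back measure $\nu_\alpha$, tested against H\"older functions, and this is where I expect the main difficulty to lie. Via symbolic coding, $M_0(\alpha_0)$ is modelled on a topologically mixing subshift of finite type, and $\nu_\alpha$ is the equilibrium state of a H\"older potential $\psi_\alpha$ which, by the geometric regularity above, depends $\mathcal{C}^1$ on $\alpha$ in the H\"older norm. Analytic perturbation theory for the Ruelle--Perron--Frobenius transfer operator then yields that $\alpha \mapsto \nu_\alpha$ is $\mathcal{C}^1$ as a map into the dual of the H\"older space. Combining this with the previous step, differentiation under the integral sign gives
\[
\lambda_1'(\alpha) \,=\, \int_{M_0(\alpha_0)} \partial_\alpha G(\alpha, y) \, d\nu_\alpha(y) \,+\, \int_{M_0(\alpha_0)} G(\alpha, y) \, d(\partial_\alpha \nu_\alpha)(y),
\]
which is continuous in $\alpha$, yielding $\lambda_1 \in \mathcal{C}^1([0, b])$. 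The new content over Theorem~\ref{con.} is entirely concentrated in the transfer-operator step controlling $\partial_\alpha \nu_\alpha$; the geometric derivatives in $G$ are a routine upgrade once the extra degree of smoothness in the boundary and in the deformation parameter is available.
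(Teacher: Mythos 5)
Your plan takes a genuinely different route from the paper, but it contains a misconception about where the difficulty lies, and it dismisses as ``routine'' the part that actually carries the proof. First, the measure. In this paper $\lambda_1(\alpha)$ is defined with respect to a \emph{fixed} ergodic $\sigma$-invariant measure $\mu$ on the symbol space $\Sigma$, transported to $M_\alpha$ by the coding $R_\alpha$. Your conjugacy $\Phi_\alpha$ is exactly $R_\alpha^{-1}\circ R_{\alpha_0}$, so $\nu_\alpha=(\Phi_\alpha^{-1})_\ast\mu_\alpha=(R_{\alpha_0}^{-1})_\ast\mu$ does not depend on $\alpha$ at all: the term $\int G\, d(\partial_\alpha\nu_\alpha)$ vanishes identically, and the entire Ruelle--Perron--Frobenius perturbation step is unnecessary. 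Worse, it is not even available in this setting: a general ergodic $\sigma$-invariant $\mu$ is not an equilibrium state of a H\"older potential, so analytic perturbation theory of transfer operators does not apply to it. (It would be needed if one insisted on an intrinsically varying measure such as the SRB measure, but that is a different theorem.)

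Second, the integrand. Once the measure is recognized as fixed, everything reduces to showing that $\alpha\mapsto\log\|DB_\alpha|_{E^u_\alpha}\|$ is differentiable with \emph{uniformly bounded} first and second $\alpha$-derivatives along orbits --- uniformity in the reflection index $j$ is essential, since one must pass to the limit $m\to\infty$ in a Birkhoff average. This is precisely where the paper works: the unstable curvature $k_j(\alpha)$ is defined by the infinite recursion (\ref{k(alpha)}), and bounding $\dot k_j$ and $\ddot k_j$ independently of $j$ requires solving that recursion and exploiting the uniform contraction factor $\beta_{\max}=(1+d_{\min}k_{\min})^{-2}<1$ (Corollaries \ref{distance} and \ref{k_j}); this is also exactly why the hypothesis is upgraded from $\mathcal{C}^{4,2}$ to $\mathcal{C}^{5,3}$. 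The paper then avoids integrals altogether: using Lemma \ref{alpha_tilde{k}} to fix a single $\mu$-typical symbolic point valid for all parameters, it writes $\lambda_1(\alpha)=\lim_m\frac1m\sum_{j=1}^m f_j(\alpha)$ with $f_j(\alpha)=\log(1+d_j(\alpha)k_j(\alpha))$ as in (\ref{g}), Taylor-expands $f_j$ at $0$ with a uniform bound $|\ddot f_j|\le C_2$, and squeezes $\big|\frac{\lambda_1(\alpha)-\lambda_1(0)}{\alpha}-F\big|\le\frac{C_2}{2}\alpha$ where $F=\lim_m\frac1m\sum\dot f_j(0)$ is extracted by a subsequence argument. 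To repair your proposal you would have to delete the third step, prove the uniform derivative bounds for the curvature recursion that you labelled a routine upgrade, and then justify the exchange of $\partial_\alpha$ with the limit $m\to\infty$ --- at which point you have reproduced the paper's argument.
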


\bigskip

\noindent There are many works studying continuity properties of Lyapunov exponents (see e.g. \cite{V}, \cite{DK}). However, to our knowledge, in all these continuity is established generically,  i.e. with respect to “most” (typical) values of the parameters/perturbations involved. In the case of the open billiard considered in the present paper we establish continuity and even differentiability for all values of the parameter that appears in the perturbation, which is a truly remarkable property of this physical system.


\section{Preliminaries}

This section provides some preliminary concepts for open billiards, billiard flow, 
symbolic coding and stable/unstable manifolds. We also describe some notations related to curvatures, distances, and collision angles. In the last part of this section, we state the Oseledets multiplicative ergodic theorem and its consequence for open billiards.

\subsection{{{Open billiard}}}
\label{1}

Let $K_{i}$, where $i= 1,2,3, ...,z_0,$ be strictly convex compact domains with smooth boundaries $\partial K_i$ in $\mathbb{R}^2$. In this paper, we assume that $K= \bigcup_i K_i$ satisfies the following condition{\textbf {(H)}} of Ikawa \cite{I}: for any $ i \neq j \neq k$ the convex hull of $ K_{i} \cup K_{k}$ does not have any common points with $K_{j}$.  Let $\Omega $ be the exterior of $K$ (i.e., $ \Omega = \overline{\mathbb{R}^2 \backslash  K}$). 
Let $\Phi_t$, $t\in \mathbb{R}$, be the {\it billiard flow} such that for any particle $x= (q,v)$, where $q \in \Omega$ represents the position of $x$ and $v $ is the unit velocity of the particle $x$, then $\Phi_t(x) = (q_t, v_t) = (q + tv, v)$.  When the particle hits the boundary, then the velocity follows the collision law $v_{new}= v_{old} -2 <v_{old} , n> n$, where $n$ is the outwards unit normal vector to $\partial K$ at $q$, and $\phi $ the angle between $n= n(q)$ and  $v$.

 We denote the time of the $j$-th reflection of $x$ by $t_j(x) \in (-\infty, \infty)$ for $j \in \mathbb{Z}$. We say $t_{j}(q) = \infty$ ($t_{j}(q)  = -\infty$) if the forwards (backwards) trajectory of $x$ has less than $j$ reflections. We denote the {\it non-wandering} set of the flow $\Phi_t$ by $\Lambda = \{ x \in \widetilde{\Omega}, |t_{j}(x)| < \infty , \, \, \, for \, \, all \, \, j\in \mathbb{Z}\}$, where 
 $$\widetilde{\Omega} = \{(q,v) \in int \, \, \Omega \times \mathbb{S}^{1} \, \, or \,\, (q,v) \in  \partial \Omega \times \mathbb{S}^{1} : \langle n(q),v \rangle \geq 0 \}.$$ 

 Now let $ M = \{x=(q,v) \in \partial K \times \mathbb{S}^{1} : \langle n(q),v \rangle \geq 0 \}$ and let $ \pi : M \to \partial K$ be the canonical projection map defined by $\pi (q,v) = q$. Let $t_1(x)$ be the time of the first reflection of $x$ and let $M_{1} = \{x \in M : t_{1}(x) < \infty\}$. Define the {\it billiard ball map} $B: M_{1} \to M$ by $ B(x)=\Phi_ {t_{1}(x)} (x)$, (e.g. if $y=(p_0, w_0)$, where $p_0$ lies on $\partial K_i$ then $B(y)= B(p_0,w_0)= (p_1, w_1)$ where $p_1 = p_0 +t_1 w_0 \in \partial K_j$ and $w_1= w_0 - 2 < w_0, n > n$). The non-wandering set of the open billiard map is $M_0 = \{x \in M : |t_{j}(x)| < \infty\}$ which is a subset of $\Lambda$.  Finally, let $B : M_0 \to M_0 $ be the restriction of the open billiard map on the {\it non-wandering} set $M_{0}$. It is obvious that the non-wandering set is an invariant set. See \cite{Si1}, \cite{Si2}, \cite{Ch2}, \cite{ChM}, \cite{PS}, for general information about billiard dynamical systems. 


\bigskip

\subsection{{Symbolic coding for open billiards}}

\label{coding}

Each particular $x \in M_0$ can be coded by a {\it bi-infinite sequence} $$ \xi(x) =  (...,\xi_{-1},\xi_0,\xi_1,...) \in \{1,2,...,z_0\}^\mathbb{Z},$$ in which $ \xi_i \neq \xi_{i+1}$, for all $i \in \mathbb{Z}$, and $ \xi_j $ indicates the obstacle $K_{\xi_j}$ such that $\pi B^j(x) \in \partial K_{\xi_j}$. For example, if there are three obstacles $K_1,K_2$ and $K_3$ as above and a particular $q$ repeatedly hits $K_1,K_3, K_2, K_1,K_3, K_2$, then the bi-infinite sequence is $(...,1,3,2,1,3,2,...)$. Let $\Sigma $ be the {\it symbol space} which is defined as: $$\Sigma = \{ \xi = (...,\xi_{-1},\xi_0,\xi_1,...) \in \{1,2,...,z_0\}^\mathbb{Z} : \xi_i \neq \xi_{i+1} , \forall i \in \mathbb{Z} \}.$$ Define the {\it representation map} $ R: M_0 \to \Sigma$ by $R(x) = \xi(x)$. Let $\sigma: \Sigma \to  \Sigma $ be the {\it two-sided subshift} map 
defined by $\sigma(\xi_i) = \xi_{i+1}$. Given $\theta \in (0,1)$ define the metric $d_\theta $ on $\Sigma$ by: 

$$
d_\theta(\xi, \eta) =     \left\{ \begin{array}{lcl} 0 & \mbox {if} &  \xi_i = \eta_i  \ \   \mbox {for all} \ i \in \mathbb{Z}  \\
        \theta^n & \mbox{if } & n = max \{j \geq 0 : \xi_i = \eta_i  \ \  \mbox{for all} \ |i| < j \} 
                \end{array}\right.
$$

\noindent Then $\sigma$ is continuous with respect to $d_\theta$ (\cite{B}). It is also known that the representation map 
$R: M_0 \to \Sigma$ is a homeomorphism (see e.g. \cite{PS}). See \cite{I}, \cite{LM}, \cite{Mor}, \cite{PS}, \cite{St1},  for topics related to symbolic dynamics for open billiards.

\bigskip

\subsection{Lyapunov exponents}

Here we state a consequence of Oseledets Multiplicative Ergodic Theorem for billiards (see e.g. Ch. 2 in \cite{Po}, also see \cite{Os}, \cite{V}, \cite{KS}). 

\bigskip











\noindent For the open billiard map $B: M_0 \longrightarrow M_0$ in $\mathbb{R}^2$
we will use the coding $R: M_0 \longrightarrow \Sigma$ from Section \ref {coding}, which conjugates
$B$ with the shift map $\sigma : \Sigma \longrightarrow \Sigma$,
 to define Lyapunov exponents. It is well known that there are ergodic $\sigma$-invariant measures $\mu$ on $\Sigma$. Let $\mu$ be an {ergodic $\sigma$-invariant probability measure} on $\Sigma$.
The following is a consequence of Oseledets Multiplicative Ergodic Theorem:



\begin{thm} [A Consequence of Oseledets Multiplicative Ergodic Theorem]

There exist real numbers $\lambda_1 >  0 >  -\lambda_1$ 
and one-dimensional vector subspaces $E^u(x)$ and  $E^s(x)$ of $T_x(\partial K)$, $x \in M_0$,
depending measurably on $R(x) \in \Sigma$ such that:

\begin{enumerate} [(i)]

\item $E^u(x) $ and $E^s(x) $
for almost all $x \in M_0$;

\item $D_x B ( E^u(x)) = E^u(B(x))$ and $D_x B (E^s(x)) = E^s(B(x))$
for almost all $x\in M_0$, and

\item For almost all $x \in M_0$ there exists
$$\lim_{n\to\infty} \frac{1}{n} \log \|D_x B^n (w)\| = \lambda_1$$

\noindent whenever $0 \neq w \in E^u(x) $.

\end{enumerate} 
\label{OSE}
\end{thm}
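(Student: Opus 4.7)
The plan is to obtain this statement as a direct specialization of the Oseledets Multiplicative Ergodic Theorem to the billiard map, with the extra structure coming from (a) the $2$-dimensionality of the phase space, (b) the uniform hyperbolicity of the non-wandering set due to Sinai, and (c) the existence of a smooth invariant measure for $B$.

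First I would transport the ergodic $\sigma$-invariant measure $\mu$ on $\Sigma$ back to $M_0$ via the homeomorphism $R$ from Section \ref{coding}, obtaining a $B$-invariant, ergodic probability measure $\nu = R_{*}^{-1}\mu$. Then I would apply the Oseledets MET to the derivative cocycle $\{D_x B^n\}_{n \in \mathbb{Z}}$ acting on the two-dimensional tangent bundle over $M_0$, which yields a measurable invariant splitting together with finitely many Lyapunov exponents. Since the fibers are two-dimensional, there are at most two exponents, say $\lambda_1 \geq \lambda_2$, with corresponding one-dimensional invariant subspaces.

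Next I would invoke the hyperbolicity of the open billiard on the non-wandering set established by Sinai \cite{Si1}, \cite{Si2} (and the account in \cite{PS}), which shows that $M_0$ admits a uniformly hyperbolic splitting into stable and unstable directions. This uniform hyperbolicity forces $\lambda_1 > 0 > \lambda_2$ and identifies the Oseledets subspaces with the one-dimensional unstable and stable directions $E^u(x)$ and $E^s(x)$; in particular property (ii), the invariance $D_x B(E^{u/s}(x)) = E^{u/s}(B(x))$, is immediate from the invariance in the Oseledets decomposition (and is also built into the hyperbolic splitting). The limit formula in (iii) is part of the conclusion of the MET.

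Finally, to obtain the symmetry $\lambda_2 = -\lambda_1$ I would use the fact that the billiard ball map $B$ preserves the smooth measure $\sin\phi \, dr\, d\phi$ on $M$, so $|\det D_x B|$ is a measurable coboundary (equal to a ratio of positive densities). Birkhoff's ergodic theorem applied to $\log|\det D_x B| = \log \|D_x B|_{E^u}\| + \log \|D_x B|_{E^s}\|$ therefore gives $\lambda_1 + \lambda_2 = 0$ for $\nu$-a.e.\ $x$. The main technical point to address is that $B$ is classically only piecewise smooth (grazing collisions being singular), but on the open-billiard non-wandering set $M_0$ the no-eclipse condition \textbf{(H)} keeps all collision angles uniformly bounded away from $\pi/2$, so $B$ is actually smooth on a neighbourhood of $M_0$ and the classical MET applies without having to deal with Katok--Strelcyn-type corrections.
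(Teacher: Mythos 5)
The paper does not actually prove Theorem \ref{OSE}: it is stated as a known consequence of the Oseledets theorem, with references to \cite{Po}, \cite{Os}, \cite{V}, \cite{KS}, so there is no in-paper argument to compare yours against. Your derivation is the standard one and is essentially right: push $\mu$ back to $M_0$ via the homeomorphism $R$, apply the MET to the derivative cocycle on the two-dimensional tangent bundle, and use Sinai's uniform hyperbolicity of $M_0$ under condition {\bf(H)} to identify the Oseledets spaces with the one-dimensional unstable/stable directions and to force $\lambda_1>0>\lambda_2$; your observation that {\bf(H)} keeps collision angles uniformly below $\phi_{\max}<\pi/2$, so that $B$ is genuinely smooth near $M_0$ and no Katok--Strelcyn corrections are needed, is exactly the right point to make.

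One step needs more care than you give it. The measure $\nu=(R^{-1})_*\mu$ is carried by the non-wandering set $M_0$, which is a Cantor set of Lebesgue measure zero, so $\nu$ is in general singular with respect to the smooth invariant measure $\cos\phi\,dr\,d\phi$; you cannot deduce $\lambda_1+\lambda_2=0$ simply because ``$B$ preserves a smooth measure,'' since that measure assigns no mass to $M_0$ and Birkhoff's theorem for $\nu$ knows nothing about it. What rescues the argument is the pointwise identity $|\det D_xB|=\cos\phi(x)/\cos\phi(Bx)$, valid wherever $B$ is smooth: along an orbit $\frac{1}{n}\log|\det D_xB^n|$ telescopes to $\frac{1}{n}\bigl(\log\cos\phi(x)-\log\cos\phi(B^nx)\bigr)\to 0$ because $\cos\phi$ is bounded below on $M_0$. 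Combined with the angle term relating $|\det D_xB|$ to $\|D_xB|_{E^u}\|\cdot\|D_xB|_{E^s}\|$ --- a bounded coboundary on a uniformly hyperbolic set, which your displayed identity omits --- this yields $\lambda_1+\lambda_2=0$ for \emph{every} $B$-invariant measure on $M_0$, which is what is actually needed. With that repair your proposal is a complete proof; note also that the symmetry $\lambda_2=-\lambda_1$ is never used later in the paper, where only $\lambda_1$ and formula (\ref{g}) matter.
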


\noindent Here "for almost all $x$" means "for almost all $R(x)$" with respect to $\mu$. The numbers $\lambda_1 >  0 >  -\lambda_1$ are called {\it Lyapunov exponents}, while the invariant 
subspaces $E^u(x)$ and $E^s(x)$ are called {\it Oseledets subspaces}.
\bigskip


\subsection{Propagation of unstable manifolds for open billiards}

We describe a formula which is useful in getting estimates for $$\lim_{m\to\infty} \frac{1}{m}\log \|D_xB^m w\|,  \, \, (0 \neq w \in E^u(x), x \in M_0).$$

\bigskip

\noindent Let $M_0$ be the non-wandering set of the billiard ball map $B$ of an open billiard. Then
$$ {\Lambda} = \{ \Phi_t(x) : x\in M_0\:, \: t \in \mathbb{R}\} , $$  is the non-wandering set for the billiard flow $\Phi_t$. For $x\in {\Lambda}$ and a sufficiently small $\epsilon > 0$ let 
\[\widetilde{W}_\epsilon^s(x) = \{ y\in {\Lambda} : d (\Phi_t(x),\Phi_t(y)) \leq \epsilon \: \, \mbox{\rm for all }
\: t \geq 0 \; , \: d (\Phi_t(x),\Phi_t(y)) \to_{t\to \infty} 0\: \},\]
\[\widetilde{W}_\epsilon^u(x) = \{ y\in {\Lambda} : d (\Phi_t(x),\Phi_t(y)) \leq \epsilon \: \, \mbox{\rm for all }
\: t \leq 0 \; , \: d (\Phi_t(x),\Phi_t(y)) \to_{t\to -\infty} 0\: \} \]
be the (strong) {\it stable} and {\it unstable manifolds} of size $\epsilon$ {for the billiard flow}. Then $\widetilde{E}^u(x) = T_x \widetilde{W}_\epsilon^u(x)$ and $\widetilde{E}^s(x) = T_x \widetilde{W}_\epsilon^s(x)$. In a similar way one defines stable/unstable manifolds for the billiard ball map $B$.
For any $x = (q,v) \in M_0$ define
$$W_\epsilon^s(x) = \{ y\in M_0 : d (B^n(x),B^n(y)) \leq \epsilon \: \, \mbox{\rm for all }
\: n \in \mathbb{N} \; , \: d (B^n(x),B^n(y)) \to_{n\to \infty} 0\: \},$$
$$W_\epsilon^u(x) = \{ y\in M_0 : d (B^{-n}(x), B^{-n}(y)) \leq \epsilon \: \, \mbox{\rm for all }
\: n \in \mathbb{N} \; , \: d (B^{-n}(x),B^{-n}(y)) \to_{n\to \infty} 0\: \}.$$
In what follows we will just write $W^u(x)$ and $W^s(x)$ for $W^u_{\epsilon}(x)$ and
$W^s_{\epsilon}(x)$, assuming some appropriately chosen sufficiently small $\epsilon > 0$ is involved. Similarly for $\widetilde{W}^u$ and $\widetilde{W}^s$.

 \bigskip 
 
It is well-known that there is an one-to-one correspondence between the stable/unstable manifolds for the billiard ball map and these for the flow. Geometrically the easiest (and most convenient way) to describe this is as follows. 

\bigskip

\noindent Given $x = (q,v) \in M_0$ (so $q\in \partial{K}$ and $v \in \mathbb{S}^1$), and a small $0< r < t_1(x)$, set $y = (q+r v, v)$.
Then there is a 1-1 correspondence
$$\varphi :W^u (x) \longrightarrow \widetilde{W}^u (y)$$
such that $\varphi(z,w) = (z + t\, w, w)$ for all $(z,w) \in W^u (x)$, where $t = t(z,w) > 0$.
Similarly, there is a correspondence between $W^s (x)$ and $\widetilde{W}^s (y)$. Moreover
$$D\varphi (x) : T_xM_0 \longrightarrow T_y \Lambda$$
is so that $D\varphi(x) (E^u(x)) =\widetilde{E}^u(y)$ and $D\varphi(x) (E^s(x)) = \widetilde{E}^s(y)$.

\bigskip
It is known that $\widetilde{W}^u(y)$ has the form $\widetilde{W}^u(y) = \widetilde{Y}$, where 
$$\widetilde{Y} = \{ (p,\nu_Y(p)) : p\in Y\}$$
for some smooth curve $Y$ in $\mathbb{R}^2$ containing the point $y$ such that $Y$ is strictly convex with respect to the unit normal field $\nu_Y$, i.e. the curvature of $Y$ is strictly positive.

\bigskip

Next, let $x$ and $y$ be as above and let $x_1 = (q_1,v_1) = B(x)$. Then $q_1 = q + t_1 \, v.$ Define $y_1 = (q_1+ r' v_1, v_1)$ for some small $0< r'< t_2(x)-t_1(x)$, where $0=t_0(x)<t_1(x)<t_2(x)$. Then there is a
1-1 correspondence
$$\varphi_1 :W^u (x_1) \longrightarrow \tilde{W}^u(y_1)$$
defined as above. Again, we can write $\widetilde{W}^u(y_1) = \widetilde{Y}_1$, where 
$$\widetilde{Y}_1 = \{ (p_1,\nu_Y(p_1)) : p_1\in Y_1\}$$
for some smooth curve $Y_1$ in $\mathbb{R}^2$ containing the point $y_1$ such that $Y_1$ is strictly convex with respect to the unit normal field $\nu_{Y_1}$. Moreover the following diagram is commutative, where $t = t_1+r'$:

$$\def\normalbaselines{\baselineskip20pt\lineskip3pt \lineskiplimit3pt}
\def\mapright#1{\smash{\mathop{\longrightarrow}\limits^{#1}}}
\def\mapdown#1{\Big\downarrow\rlap{$\vcenter{\hbox{$\scriptstyle#1$}}$}}
\begin{matrix}
W^u(x) &\mapright{B}& W^u(x_1)\cr 
\mapdown{\varphi}& & \mapdown{\varphi_1}\cr \widetilde{W}^u(y) = \widetilde{Y}&\mapright{\Phi_t}& \widetilde{W}^u(y_1) = \widetilde{Y}_1
\end{matrix}
$$ 
Similarly, the following diagram  is commutative:  
$$\def\normalbaselines{\baselineskip20pt\lineskip3pt \lineskiplimit3pt}
\def\mapright#1{\smash{\mathop{\longrightarrow}\limits^{#1}}}
\def\mapdown#1{\Big\downarrow\rlap{$\vcenter{\hbox{$\scriptstyle#1$}}$}}
\begin{matrix}
E^u(x) &\mapright{DB(x)}& E^u(x_1)\cr 
\mapdown{D\varphi}& & \mapdown{D\varphi_1}\cr \widetilde{E}^u(y) &\mapright{D\Phi_t(y)}& \widetilde{E}^u(y_1)
\end{matrix}
$$ 

\noindent Since the derivatives $D\varphi$ and $D\varphi_1$ are uniformly bounded, the above conjugacy can be used later
to calculate the Lyapunov exponents of the billiard ball map using propagation of appropriate convex curves $Y$ which we describe as follows. 

\bigskip 
 
Let $x_0 = (q_0,v_0) \in M_{0}$ and let $W^u_\epsilon (x_0)$ be the local unstable manifold for $x_0$ for sufficiently small $\epsilon > 0$. 
Let $t_1(x_0)$ be the time of the first reflection of $x_0$. Then $\widetilde{X}= {W}^u_\epsilon(x_0)  = \{ (q, n_X(q)) : q\in X\}$  for some $\mathcal{C}^3$ curve $X$ in $\Omega$ such that $q_0 \in X$ and $X$ is strictly convex curve with respect to the outer unit normal field $n_X (q)$. Let $X$ be parametrized by $q(s), s \in [0, a]$, such that $q(0) = q_0 $, and has unit normal field $n_X(q(s))$.  Set $q_0(s)= q(s)$. Let $q_j(s), j \geq 1$ be the $j$th-reflection points of the forward billiard trajectory $\gamma(s)$ generated by $x(s) = (q(s), n_X(q(s))$. We assume that $a > 0$ is sufficiently small so that the $j$th-reflection points $q_j(s)$ belong to the same boundary component $\partial{K}_{\xi_j}$ for every $s\in [0,a]$. Let $0 = t_0(x(s)) < t_1(x(s)) < ... < t_{m+1}(x(s))$ be the times of the reflections of the ray $\gamma(s)$ at $\partial K$.
 Let $\kappa_j(s)$ be the {\it curvature} of $\partial K_{\xi_j}$ at $q_j(s)$ and $\phi_j(s)$ be the {\it collision angle} between the outward unit  normal to $\partial K$ and the reflection ray of $\gamma(s)$ at $q_j(s)$. Also, let $d_j(s)$ be the {\it distance} between two reflection points i.e. $d_j(s) =  \| q_{j+1}(s) - q_j(s)\|$, $j=0,1,\dots,m$. 
 
\bigskip
 
\noindent Given a large $m\geq 1$, let $t_{m}(x(s)) < t < t_{m+1}(x(s))$. 
Set $\Phi_{t} (\widetilde{X}) = \widetilde{X}_{t}$. Let $\pi (\Phi_{t} (x(s))) = p(s)$. Then $p(s), s\in [0, a]$, is a parametrization of the $\mathcal{C}^3 $ curve $X_{t} = \pi (\Phi_{t} (\widetilde{X})$.


\bigskip 



Next, let $k_0(s) > 0$ be the {\it curvature} of $X$ at $q(s)$. Let $t_{j}(x(s)) < \tau < t_{j+1}(x(s))$, $j=1,2,\dots,m$. Denote by $u_\tau(s)$ be the shift of $(q(s), n(q(s)))$ along the forward billiard trajectory $\gamma(s)$ after time $\tau > 0$. Then $X_\tau=\{u_\tau(s) : s\in [0,a]\}$ is a $\mathcal{C}^3$ convex curve with respect to the outward normal field $n(u_\tau(s))$. Let $k_j(s) > 0$ be the curvature of $X_{t_j} = \lim_{\tau \searrow t_{j}(s)} X_\tau$  at $q_j(s)$. 
 It follows from Sinai \cite{Si1} that

\bigskip

\begin{equation}
k_{j+1}(s) = \frac{k_j(s)}{ 1 + d_j(s) k_j(s) } + 2 \frac{\kappa_{j+1}(s)}{\cos\phi_{j+1}(s)}\:\:\:\: , \:\:\: 0\leq j \leq m-1\;.
\label{k}
\end{equation}

\noindent Moreover, the curvature of $X_\tau$ at $u_\tau(s)$ is  

\begin{equation}
k_{\tau}(s) = \frac{k_j(s)}{ 1 + (\tau-t_j(s)) k_j(s) }.
\label{kt}
\end{equation}

\bigskip

Set
\begin{equation}
\delta_j(s) = \frac{1}{ 1 + d_j(s) k_j(s)} \:\:\:\: , \:\:\: 1\leq j \leq m\;.
\end{equation}

\bigskip

\noindent
\begin{thm} {\em{\cite{St2}}}
For all $s \in [0,a]$  we have

\begin{equation}
\| \dot{q}(s)\| = \| \dot{p}(s)\| \delta_{1}(s)\delta_{2}(s) \ldots \delta_m(s)\;.
\label{for.} 
\end{equation}

\label{ST theorem}

\end{thm}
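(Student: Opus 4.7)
The plan is to compute $\|\dot{p}(s)\|$ by tracking the evolution of the tangent vector $\partial_s \eta(s,\tau) := \partial_s \pi(\Phi_\tau(x(s)))$ along the trajectory as $\tau$ runs from $0$ to $t$; by construction $\partial_s \eta(s,0) = \dot{q}(s)$ and $\partial_s \eta(s,t) = \dot{p}(s)$. Between consecutive reflections the trajectory is a straight line $\eta(s,\tau) = q_j(s) + (\tau - t_j(s))\, v_j^+(s)$, where $v_j^+(s)$ is the post-reflection unit velocity. Differentiating in $s$ and using the two-dimensional Frenet identity $\partial_s v_j^+(s) = k_j(s)\,\partial_s q_j(s)$, which holds because $v_j^+$ is the unit outward normal field along the convex wavefront $X_{t_j}$ with curvature $k_j(s)$, will show that on a free flight segment of duration $\Delta\tau$ the norm $\|\partial_s \eta\|$ is multiplied by $1 + \Delta\tau\, k_j(s)$. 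This is the tangent-vector counterpart of the curvature propagation formula~(\ref{kt}).

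At each reflection time $\tau = t_{j+1}(s)$ the partial derivative $\partial_s \eta$ is discontinuous; differentiating the two expressions for $\eta$ valid just before and just after $t_{j+1}(s)$ yields
\[
\partial_s \eta|_{t_{j+1}^+} - \partial_s \eta|_{t_{j+1}^-} = -\dot{t}_{j+1}(s)\bigl(v_{j+1}^+(s) - v_{j+1}^-(s)\bigr) = -2\dot{t}_{j+1}(s)\cos\phi_{j+1}(s)\, n(q_{j+1}(s))
\]
by the collision law. The crucial algebraic point, which I would verify by a direct calculation, is that this jump does not change the Euclidean norm: implicit differentiation of the reflection constraint $\langle \dot{q}_{j+1}(s), n(q_{j+1}(s))\rangle = 0$ gives $\dot{t}_{j+1}(s)\cos\phi_{j+1}(s) = \langle \partial_s \eta|_{t_{j+1}^-}, n(q_{j+1})\rangle$, and substituting this into $\|\partial_s \eta|_{t_{j+1}^+}\|^2$ produces a two-term cancellation that leaves $\|\partial_s \eta|_{t_{j+1}^-}\|^2$. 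Thus the wavefront tangent is norm-continuous at each reflection, even though the vector itself jumps along $n(q_{j+1})$.

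Combining these two steps inductively along the trajectory, the total stretching factor $\|\dot{p}(s)\|/\|\dot{q}(s)\|$ becomes a product of free flight contributions of the form $1 + \Delta\tau\cdot k_j(s)$. Using Sinai's curvature propagation formula~(\ref{k}) to relate consecutive curvatures and the definition $\delta_j(s) = 1/(1 + d_j(s)k_j(s))$, the product collapses to $1/(\delta_1(s)\cdots\delta_m(s))$, giving~(\ref{for.}).

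The main obstacle is the norm-preservation claim at reflections. The free flight step is a routine chain-rule calculation resting on the Frenet relation for $v_j^+$; the reflection step, by contrast, requires first computing $\dot{t}_{j+1}(s)$ via implicit differentiation of the reflection constraint and then the specific algebraic cancellation described above. Once this is in place, the final bookkeeping to pass from free flight stretches to the product $\prod_{j=1}^{m}\delta_j^{-1}$ is straightforward.
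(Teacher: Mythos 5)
The paper itself gives no proof of this theorem --- it is imported verbatim from \cite{St2} (with \cite{St3} for the higher-dimensional case) --- so there is no in-paper argument to compare against; what follows assesses your proposal against the standard wavefront-propagation argument of the cited source, which is exactly the route you take. Your reflection step is correct and is indeed the crux: since $\dot t_{j+1}(s)\cos\phi_{j+1}(s)=\langle \partial_s\eta|_{t_{j+1}^-},n(q_{j+1})\rangle$, the jump you compute is $-2\langle \partial_s\eta|_{t_{j+1}^-},n\rangle\,n$, i.e.\ a Euclidean reflection of the wavefront tangent in the line orthogonal to $n(q_{j+1})$, hence norm-preserving; the ``two-term cancellation'' you anticipate is precisely this.

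One step needs repair before the argument closes. The Frenet relation in your free-flight paragraph should be $\partial_s v_j^+(s)=k_j(s)\bigl(\partial_s q_j(s)-\dot t_j(s)\,v_j^+(s)\bigr)$ rather than $k_j(s)\,\partial_s q_j(s)$: the tangent to the wavefront at $q_j(s)$ is $\partial_s\eta|_{t_j^+}=\dot q_j-\dot t_j v_j^+$, which differs from the boundary tangent $\dot q_j$ by a component along the ray (one has $\dot t_j=\langle v_j^-,\dot q_j\rangle\neq 0$ in general), and $k_j$ is the curvature of the wavefront germ, not of the point set $\{q_j(s)\}\subset\partial K_{\xi_j}$, whose curvature is $\kappa_j$. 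If you use $\dot q_j$, then $\partial_s\eta(s,\tau)=\dot q_j-\dot t_j v_j^+ +(\tau-t_j)k_j\dot q_j$ is not a scalar multiple of $\partial_s\eta|_{t_j^+}$ and the factor $1+\Delta\tau\,k_j$ does not fall out of the norm; with the corrected relation one gets $\partial_s\eta(s,\tau)=(1+(\tau-t_j)k_j)\,\partial_s\eta|_{t_j^+}$, consistent with (\ref{kt}). Finally, the literal bookkeeping produces $\|\dot p\|=\|\dot q\|\,(1+d_0k_0)\cdots(1+d_{m-1}k_{m-1})(1+(t-t_m)k_m)$, which matches $\prod_{j=1}^m\delta_j^{-1}$ only after reconciling the endpoint and indexing conventions of the statement; this should be said explicitly, although it is harmless for the Lyapunov-exponent application, where only the exponential growth rate in $m$ matters.
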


\label{STd}

\noindent This was proved in \cite{St2} in the $2D$ case and in \cite{St3} in the general case. 



\bigskip


\noindent 

\noindent Finally, we want to introduce some notation related to the maximum and minimum of previous billiard characteristies $d_j(s)$,$\kappa_j(s)$, $\phi_j(s)$ and $k_j(s)$. For all $j$, we have $d_{\min} \leq d_j(s) \leq  d_{\max}$, where $d_{\max}$ and $d_{\min}$ are constants independent of $j$ such that $d_{\max} = \max \{d(K_i, K_k)\}$ and $d_{\min} = \min \{d(K_i, K_k)\}$ for $i\neq k$. Also, since $\partial K$ is strictly convex, we have constants $\kappa_{\min} >0$ and $\kappa_{\max} >0$ independent of $j$ such that $\kappa_{\min} \leq \kappa_j(s) \leq  \kappa_{\max}$. And it follows from the condition ({\bf{H}}) that there exists a constant $\phi_{\max} \in (0, \frac{\pi}{2})$ such that 
 $0 \leq \phi_j(s) \leq \phi_{\max} < \frac{\pi}{2}$, (see e.g. \cite{St1}). Let $k_j(s)$ be as in equation (\ref{k}). It follows easily that $k_{\min} \leq k_j(s) \leq  k_{\max}$, where $k_{\min} = 2\kappa_{\min}$ and $k_{\max} = \frac{1}{d_{\min}}  + \frac{2\kappa_{\max}}{\cos\phi_{\max}}$.

\section {Estimation of the largest Lyapunov exponent for open billiards}

A formula for the largest Lyapunov exponents for a rather general class of billiards can be found in \cite {ChM}, see Theorem 3.41 there. In our case we derive this formula again (see (\ref{g}) below) and then we use Theorem \ref{ST theorem} to derive important regularity properties of the largest Lyapunov exponent.

Assume that $\mu$ is an ergodic $\sigma$-invariant measure on $\Sigma$,
and let $x_0 = (q_0,v_0) \in M_0$ correspond to a typical point in $\Sigma$  with respect to $\mu$ via the
representation map $R$. That is as in Theorem \ref{OSE}, we have
$$\lambda_1 = \lim_{m\to\infty} \frac{1}{m} \log \|D_{x_0}B^m(w)\| ,$$ with $0 \neq w \in E^u(x_0)$.
As in Sect. \ref{STd}, let $X$ be a (small) $C^3$ strictly convex curve containing
$q_0 $ and having a unit normal field $n_X$ so that $n_X(q_0) = v_0$.
As in Sect. \ref{STd} again, let $X$ be parametrised by arc length via $q(s)$, $s \in [0,a]$, such that
$q(0) = q_0$. Let again $q_j(s)$, $j = 1,2, \ldots, m+1$, be the consecutive reflection points of the billiard 
trajectory $\gamma(s)$ determined by $x(s) = (q(s), n_X(q(s))$. Given an integer $m > 0$ and assuming
the interval $[0,a]$ is sufficiently small, the $j$th reflection points $q_j(s)$ belong to the same boundary
component $\partial K_{\xi_j}$ for all $s\in [0,a]$. Next, define $d_j(s)$, $t_j(x(s))$, etc. as in Sect. \ref{STd},
let $t_m(x(0)) < t < t_{m+1}(x(0))$, and let $p(s)$ be the parametrisation of $\widetilde{X}_t$ corresponding to $q(s)$.
Then the formula (\ref{for.}) in Theorem \ref {ST theorem} (holds with ${\|\dot{q}(s)\| =1}$ 
from our assumptions). Now the discussion 
in Sect. \ref{STd} implies that there exist some global constants $c_1 > c_2 > 0$, independent of $x_0$, $X$, $m$, etc. such that
$$c_2 \|\dot{p}(s)\| \leq \|D_{x_0}B^m(w)\| \leq c_1 \|\dot{p}(s)\|$$
for all $s\in [0,a]$. So, by (\ref{for.}),
$$\frac{c_2 }{\delta_1(0) \delta_2(0) \ldots  \delta_{m}(0)} \leq \|D_{x_0}B^m(w)\| 
\leq \frac{c_1}{\delta_1(0) \delta_2(0) \ldots \delta_{m}(0)}$$
for all $s \in [0,a]$. Using this for $s = 0$, taking logarithms and limits as $m \to \infty$, we obtain\\

\begin{equation*}
\begin{aligned}
- \lim_{m\to\infty} \frac{1}{m} \log \left( \delta_1(0) \delta_2(0) \ldots  \delta_{m}(0)\right) 
&\leq \lim_{m\to\infty}\frac{1}{m} \log \|D_{x_0}B^m(w)\| \\
&\leq - \lim_{m\to\infty} \frac{1}{m} \log \left( \delta_1(0) \delta_2(0) \ldots  \delta_{m}(0)\right).
\end{aligned}
\end{equation*}

\noindent Hence,

\begin{equation*}
\begin{aligned}
\lambda_1 &=\lim_{m\to \infty} - \frac{1}{m} \sum_{i=1}^{m} \log \delta_{i}(0). \\
%
%
\end{aligned}
\end{equation*}

\noindent This implies that the largest Lyapunov exponent at the initial point $x_0$, so at almost every point wilt respect to the given measure $\mu$, is given by
 
\begin{equation}
\begin{aligned}
\lambda_1=\lim_{m\to \infty}  \frac{1}{m} \sum_{i=1}^{m} \log {\Big(1+d_i (0) k_i (0)\Big)}. 
\label{g}
\end{aligned}
\end{equation}

\noindent From equation ({{\ref{g}}}), we can estimate the largest Lyapunov exponent from below and above as 

\begin{equation*}
\begin{aligned}
 \log {(1+d_{\min} k_{\min})} \leq  \lambda_1 \leq \log {(1+d_{\max} k_{\max})}.
\end{aligned}
\end{equation*}

\label{LE}





\section{Billiard deformations}
\setcounter{equation}{0}

\label{defo}

In this section, we consider some changes to the billiards in the plane, such as moving, rotating, and changing the shape of one or multiple obstacles. This kind of billiard transformation is called a billiard deformation as defined in \cite{W2}. We describe this deformation by adding an extra parameter $\alpha \in [0, b]$ for some $b \in \mathbb{R}^+$, which is called the deformation parameter, to the parametrization of the boundary of obstacles i.e., 
if the boundary of an obstacle parametrized by $\varphi(u)$, it will become $\varphi(u, \alpha)$. In this section, we provide the definition a billiard deformation as defined in \cite{W2}. In addition, we describe the propagation of unstable manifolds for billiard deformations. We also estimate the higher derivatives of some of the billiard characteristics such as distance, collision angle and curvature, with respect to deformation parameter $\alpha$. 



\bigskip

Let $\alpha \in I = [0, b]$, for some $b \in \mathbb{R}^+$, be a {\it deformation parameter} and let $\partial K_i(\alpha)$ be parametrized counterclockwise by $\varphi_i(u_i, \alpha)$ and parametrized by arc-length $u_i$. Let $q_i = \varphi_i(u_i, \alpha) $ be a point that lies on $\partial K_i(\alpha)$. Denote the perimeter of $\partial K_i(\alpha)$ by $L_i(\alpha)$, and let $P_i=\{ (u_i, \alpha) : \alpha \in I, u_i \in [0,L_i(\alpha)]\}$.

\bigskip

\begin{defn} \cite{W2} For any $\alpha \in I = [0, b]$, let $K(\alpha)$ be a subset of $\mathbb{R}^2$. For integers $r \geq 4, r' \geq 2$, we call $K(\alpha) $ a $\mathcal {C}^{r,r'}$-{{\it billiard deformation}} (i.e. $\mathcal{C}^r$ with respect to $u$ and $\mathcal{C}^{r'}$ with respect to $\alpha$) if the following conditions hold for all $\alpha \in I$:

\begin{enumerate}

\item $K(\alpha) = \bigcup_{i=1}^{z_0} K_i(\alpha)$ satisfies the no-eclipse condition $({\bf{H}})$.  

\item Each $K_i(\alpha)$ is a compact, strictly convex set with $\mathcal{C}^r$ boundary and total arc length $L_i(\alpha)$. 

\item Each $K_i$ is parametrized counterclockwise by arc-length with $\mathcal{C}^{r,r'}$ functions ${\varphi}_i : P_i \to \mathbb{R}^2 .$

\item For all integers $0 \leq l \leq r, 0\leq l' \leq r'$ (apart from $l=l'=0$), there exist constants $C_\varphi ^{(l,l')} $ depending only on the choice of the billiard deformation and the parametrizations $\varphi_i$, such that for all integers $i=1,2,3,...,z_0$, 
$$  \Big | \frac { \partial ^{l+l'} \varphi_i}{\partial u^l_i \partial \alpha^{l' }} \Big | \leq {C}^{(l,l')}_ \varphi. $$

\end{enumerate}

\label{43}
\end{defn}


\noindent Let $B_\alpha$ be the open billiard map on a non-wandering set $M_\alpha$ for $K(\alpha)$. Let $\Sigma$ defined in Sec. \ref{coding}, we defined $R_\alpha : M_\alpha \to \Sigma$ by $R_\alpha (x(\alpha)) = \xi(x(\alpha))$. We can write the points that correspond to the billiard trajectories according to the parameterization in previous definition as follows, $\pi(B^j(x(\alpha)) ) = q_{\xi_j} (\alpha) = \varphi_{\xi_j} (u_{\xi_j} (\alpha), \alpha) \in \partial K_{\xi_j}(\alpha)$, where $u_{\xi_j}(\alpha) \in[0, L_{\xi_j}(\alpha)]$. For brevity, we will write $q_j (\alpha) = \varphi_j (u_j (\alpha), \alpha)$.


\bigskip 

The next corollary shows that $u_j(\alpha) = u_{\xi_j}(\alpha)$ for a fixed $\xi \in \Sigma$, is differentiable with respect to $\alpha$. This corollary is proved in \cite{W2}. 

\begin{thm} {\em{\cite{W2}}}
Let $K(\alpha)$ be a $\mathcal{C}^{r,r'}$ billiard deformation with $r,r' \geq 2$. Then $u_j(\alpha)$ is $\mathcal{C}^{min\{r-1,r'-1\}}$ with respect to $\alpha$, and there exist constants $C_u^{(n)} >0$ such that $$\Big | \diff[n]{u_j(\alpha)}{\alpha} \Big |  \leq C^{(n)}_u.$$

\label{3}

\end{thm}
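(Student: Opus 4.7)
The plan is to realize the sequence of reflection parameters as the unique zero of an implicit equation derived from the reflection law, and then apply the implicit function theorem in a suitable Banach space of sequences. Fix a symbol sequence $\xi \in \Sigma$ and set $\varphi_j := \varphi_{\xi_j}$. Writing $L_j(u_j, u_{j+1}, \alpha) := \|\varphi_{j+1}(u_{j+1}, \alpha) - \varphi_j(u_j, \alpha)\|$, the classical variational characterisation of billiard trajectories says that $\{u_j(\alpha)\}_{j\in\mathbb{Z}}$ defines a billiard orbit exactly when, for every $j\in\mathbb{Z}$,
\begin{equation*}
F_j(u_{j-1}, u_j, u_{j+1}, \alpha)
\;:=\; \frac{\partial L_{j-1}}{\partial u_j} + \frac{\partial L_j}{\partial u_j}
\;=\; 0,
\end{equation*}
which is just the statement that $\langle \tau_j, (q_j - q_{j-1})/d_{j-1} + (q_j - q_{j+1})/d_j\rangle = 0$, i.e.\ the angle of incidence equals the angle of reflection. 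Because $d_j(\alpha) \geq d_{\min} > 0$ uniformly, and $\varphi_j$ is $\mathcal{C}^{r,r'}$ with the uniform bounds of Definition \ref{43}, the map $F_j$ is $\mathcal{C}^{r-1, r'-1}$ in its arguments with partial derivatives bounded uniformly in $j$.

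Next, I would view $F = (F_j)_{j\in\mathbb{Z}}$ as a map $F \colon \ell^\infty(\mathbb{Z}) \times I \to \ell^\infty(\mathbb{Z})$. By the uniform bounds above, $F$ is $\mathcal{C}^{r-1, r'-1}$ as a Banach-space map. The actual reflection parameters $u^*(\alpha) = (u_j(\alpha))_{j\in\mathbb{Z}}$ associated to $\xi$ and the billiard $K(\alpha)$ satisfy $F(u^*(\alpha), \alpha) = 0$. The implicit function theorem will then give $u^*(\alpha)$ as a $\mathcal{C}^{\min\{r-1, r'-1\}}$ curve in $\ell^\infty(\mathbb{Z})$, provided the partial derivative $D_u F(u^*(\alpha), \alpha)$ is invertible as a bounded operator on $\ell^\infty(\mathbb{Z})$ with inverse bounded uniformly in $\alpha$. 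Once this is granted, reading off the $j$-th coordinate yields $u_j(\alpha) \in \mathcal{C}^{\min\{r-1, r'-1\}}$, and the $\ell^\infty$-norm bound on $d^n u^* / d\alpha^n$ gives a constant $C_u^{(n)}$ independent of $j$, as required.

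The main obstacle is therefore the invertibility of $D_u F(u^*(\alpha), \alpha)$ on $\ell^\infty(\mathbb{Z})$, which is an infinite tridiagonal linear operator whose entries are computable in terms of the curvatures $\kappa_j$, the collision angles $\phi_j$ and the distances $d_j$ already controlled in the preliminaries. This is where the hyperbolicity of the open billiard, coming from condition \textbf{(H)} together with strict convexity, enters: one shows that the diagonal entries $\partial F_j/\partial u_j$ are bounded away from zero by a quantity comparable to $\cos\phi_j(k_j + k_j')$, while the off-diagonal entries $\partial F_j/\partial u_{j\pm 1}$ are strictly smaller in a manner controlled by the contraction/expansion factors $\delta_j$ appearing in Theorem \ref{ST theorem}. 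One can make this precise by decomposing $\ell^\infty(\mathbb{Z})$ along the stable and unstable directions of the symplectic Jacobi-type operator associated with $D_u F$: on each direction the operator acts as a perturbation of a strictly contracting (resp.\ expanding) shift, and the corresponding Neumann-type inverse converges in $\ell^\infty$ with norm controlled by $(1 - \theta)^{-1}$ for some $\theta < 1$ coming from the hyperbolicity constants. Equivalently, this invertibility is a quantitative version of the shadowing lemma for the billiard map on the hyperbolic non-wandering set. Once this uniform invertibility is established, the implicit function theorem yields the conclusion, and the higher-derivative bounds $C_u^{(n)}$ follow by differentiating the identity $F(u^*(\alpha), \alpha) \equiv 0$ and iterating, using at each stage the uniform bounds $C_\varphi^{(l,l')}$ of Definition \ref{43}.
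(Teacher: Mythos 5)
The paper does not actually prove this statement: it is quoted verbatim from \cite{W2} and the text immediately above it says ``This corollary is proved in \cite{W2}.'' So the only possible comparison is with the cited source, and your strategy --- characterise $(u_j(\alpha))_j$ as the zero set of the reflection conditions $F_j=\partial L_{j-1}/\partial u_j+\partial L_j/\partial u_j=0$ and apply the implicit function theorem on $\ell^\infty(\mathbb{Z})$ --- is indeed the standard route and is in the spirit of Wright's argument. The variational set-up, the regularity of $F$ (in fact $F$ is $\mathcal{C}^{r-1}$ in $u$ and $\mathcal{C}^{r'}$ in $\alpha$, which is more than enough for the stated $\min\{r-1,r'-1\}$), and the derivation of the higher-order bounds $C_u^{(n)}$ by differentiating $F(u^*(\alpha),\alpha)\equiv 0$ are all sound in outline.

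The genuine gap is the uniform invertibility of $D_uF$ on $\ell^\infty(\mathbb{Z})$, which is the entire content of the theorem, and your justification of it contains a step that would fail as stated. You claim the tridiagonal operator is handled because the diagonal entries $\partial F_j/\partial u_j\approx \cos^2\phi_j\bigl(\tfrac{1}{d_{j-1}}+\tfrac{1}{d_j}\bigr)+2\kappa_j\cos\phi_j$ dominate the off-diagonal ones, which are of size up to $\cos\phi_{j}\cos\phi_{j\pm1}/d_j$. Row dominance then requires roughly $2\kappa_j\geq(1-\cos\phi_j)\bigl(\tfrac{1}{d_{j-1}}+\tfrac{1}{d_j}\bigr)$, which need not hold for an open billiard with small curvature and nearby obstacles; so the ``perturbation of a contracting shift'' picture does not follow from the entries alone. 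The correct mechanism is the one you gesture at in your last sentences --- uniform positive definiteness of the second variation of the length functional (equivalently, absence of nonzero bounded Jacobi fields, i.e.\ uniform hyperbolicity of $M_0$ under condition {\bf(H)}), followed by the passage from a spectral lower bound to a bounded inverse on $\ell^\infty$ via the exponential off-diagonal decay of inverses of uniformly invertible banded operators --- but this is precisely the lemma that has to be proved, and in your write-up it is asserted rather than established. Secondary, more routine points that still need to be said: $F$ must be checked to be Fr\'echet differentiable as a map $\ell^\infty\times I\to\ell^\infty$ (this uses the uniform continuity of the derivatives of $\varphi_i$ on the compact obstacles), and the local uniqueness from the implicit function theorem must be matched with the actual billiard orbit for the itinerary $\xi$, which is where Ikawa's existence--uniqueness of trajectories under {\bf(H)} enters.
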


The next corollary follows from  {{Definition \ref{43}}} and Theorem {\ref {3}}. 
\begin{cor}
Let $K(\alpha)$ be a $\mathcal{C}^{r,r'}$ billiard deformation with $r,r' \geq 2$. Let $q_j(\alpha)$ belongs to $\partial K_{\xi_j}$. Then $q_j(\alpha)$ is $\mathcal{C}^{n}$, where $n=\min\{r-1,r'-1\}$, with respect to $\alpha$, and there exist constants $C_q^{(n)} >0$ such that $$\Big | \diff[n]{q_j(\alpha)}{\alpha} \Big |  \leq C^{(n)}_q.$$

\label{Dq}
\end{cor}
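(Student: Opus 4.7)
The plan is to derive Corollary \ref{Dq} directly from the chain rule applied to the composition $q_j(\alpha) = \varphi_{\xi_j}(u_j(\alpha), \alpha)$, using the uniform derivative bounds already at hand from Definition \ref{43} and Theorem \ref{3}. Set $n = \min\{r-1, r'-1\}$.

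First I would verify the smoothness. Since $\varphi_j$ is $\mathcal{C}^{r,r'}$ on $P_j$, each mixed partial $\partial^{l+l'}\varphi_j/(\partial u^l\partial\alpha^{l'})$ is continuous and bounded by $C_\varphi^{(l,l')}$ whenever $l \leq r$ and $l' \leq r'$. By Theorem \ref{3}, $u_j(\alpha)$ is $\mathcal{C}^n$ with $|d^m u_j/d\alpha^m| \leq C_u^{(m)}$ for $m \leq n$. Since $n \leq r$ and $n \leq r'$, a straightforward induction on $k \leq n$ shows that $\alpha \mapsto \varphi_j(u_j(\alpha), \alpha)$ is $\mathcal{C}^k$: each additional differentiation consumes at most one more derivative of $u_j$ and at most one more unit in the combined order of the partials of $\varphi_j$, and in both slots we remain within the allowed range.

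For the derivative estimate I would use Faà di Bruno's formula for multivariate compositions. The first derivative is
$$\frac{dq_j}{d\alpha}(\alpha) = \frac{\partial \varphi_j}{\partial u}(u_j(\alpha), \alpha)\, \frac{du_j}{d\alpha}(\alpha) + \frac{\partial \varphi_j}{\partial \alpha}(u_j(\alpha), \alpha),$$
and for $1 \leq k \leq n$ the derivative $d^k q_j/d\alpha^k$ is a finite universal polynomial (with integer coefficients depending only on $k$) in the quantities $\partial^{l+l'}\varphi_j/(\partial u^l \partial \alpha^{l'})$ with $l + l' \leq k$ and $d^m u_j/d\alpha^m$ with $m \leq k$. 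Each such ingredient is uniformly bounded by the corresponding $C_\varphi^{(l,l')}$ or $C_u^{(m)}$. Taking absolute values and applying the triangle inequality produces a constant $C_q^{(n)}$ depending only on these previously fixed constants, which yields the claimed bound.

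I do not expect a genuine obstacle: the only point requiring a brief comment is why the regularity stops at $n = \min\{r-1, r'-1\}$ rather than $\min\{r, r'\}$, and the answer is simply that the composition's regularity is capped by that of the inner function $u_j$, which is $\mathcal{C}^{\min\{r-1, r'-1\}}$ by Theorem \ref{3}. The remainder of the proof is routine combinatorial bookkeeping.
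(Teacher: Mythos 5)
Your proposal is correct and follows exactly the route the paper intends: the paper states only that the corollary ``follows from Definition \ref{43} and Theorem \ref{3},'' and your chain-rule/Fa\`a di Bruno argument on $q_j(\alpha)=\varphi_{\xi_j}(u_j(\alpha),\alpha)$, bounding each ingredient by $C_\varphi^{(l,l')}$ and $C_u^{(m)}$, is precisely the omitted bookkeeping (and mirrors the explicit computations the paper carries out for $\kappa_j$ and $d_j$ in the subsequent corollaries). Your remark that the regularity is capped at $\min\{r-1,r'-1\}$ by the inner function $u_j$ is also the right explanation.
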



\subsection{Propagation of unstable manifolds for billiard deformations}

We described the unstable manifolds propagation in Section \ref {STd} for open billiards. Here in this section, we describe it for billiard deformations. 

Let $K(\alpha), \alpha \in [0,b]$ be a $\mathcal{C}^{r,r'}$ billiard deformation as in {{Definition \ref{43}}} with $r \geq 3 ,r' \geq 1$. $x_0(\alpha) = (q_0(\alpha),v_0(\alpha)) \in M_{\alpha}$ and let $W^u_\epsilon (x_0 (\alpha))$ be the local unstable manifold for $x_0(\alpha)$ for sufficiently small $\epsilon > 0$. Take a curve $X_\alpha$ containing $q_0(\alpha)$ such that $X_\alpha = \{q_0(s, \alpha) : s \in [0,a]\}$ is a convex curve with outer unit normal field $n_X(q_0(s, \alpha)) = v_0(\alpha)$ and $\mathcal{C}^3$ with respect to $s$. It follows from Sinai \cite{Si1}, \cite{Si2} that $W^u_\epsilon (x_0 (\alpha)) =  \{ (q_0(s, \alpha), n_X(q_0)): s \in [0,a]\}$. Set $\widetilde{X}_{\alpha} =W^u_\epsilon (x_0 (\alpha))$. Let $a\in \mathbb{R}^+$ be small enough such that all reflection points $q_j(s, \alpha), j= 1,2,..., m$, that are generated by $x_0(s,\alpha)=(q_0(s,\alpha), n_X(q_0(s,\alpha)))$ belong to the same boundary $\partial K _{\xi_j}(\alpha)$. Let $d_j(s,\alpha) = \| q_{j+1}(s,\alpha) - q_{j}(s,\alpha) \|$ be the distance between two reflection points $q_{j+1}(s,\alpha)$ and $q_{j}(s,\alpha)$. Denote the curvature of $\partial K(\alpha)$ at $ q_{j}(s,\alpha)$ by $\kappa_j(s, \alpha)$, the collision angle between the unit normal to $\partial K(\alpha)$ and the reflection vector at $q_j(s,\alpha)$ by $\phi_j(s,\alpha)$, and the curvature of $X$ at $q_0(s,\alpha)$ by $k_0(s,\alpha)$ .

Let $t_j(x(s,\alpha))= t_j(s, \alpha)$ be the time of the j-th reflection. Given $t$ with $t_j < t < t_{j+1}$ for some $j = 1, 2,..., m$,  set $\pi(\Phi_t (\widetilde{X}_{\alpha})) = X_{\alpha_t}$. Then $X_{\alpha_t} =\{u_{\alpha_t}(s,\alpha) : s\in[0,a]\}$ is $\mathcal{C}^3$ with respect to $s$ and a convex curve with outer unit normal field $n_{X_{\alpha_t}}(u_{\alpha_t}(s, \alpha))$. Denote the curvature of $X_{{\alpha_t}_j(s,\alpha)}$ at $q_j(s,\alpha)$ by $k_{j}(s,\alpha)$, where $X_{{\alpha_t}_j(s,\alpha)} = \lim_{t \searrow t_{j}(s,\alpha)} X_{\alpha_t}$. As in equation (\ref{k}), we can define $k_{j}(s,\alpha)$  as follows:

\bigskip

\begin{equation}
k_{j+1}(s,\alpha) = \frac{k_j(s,\alpha)}{ 1 + d_j(s,\alpha) k_j(s,\alpha) } + 2 \frac{\kappa_{j+1}(s,\alpha)}{\cos\phi_{j+1}(s,\alpha)}\:\:\:\: , \:\:\: 0 \leq j \leq m-1\;.
\label{k(alpha)}
\end{equation}





\bigskip

From now on, we will need to use previous characteristics in the case $s=0$, so for brevity, we will write $d_j(\alpha) = d_j(0, \alpha), etc$. Also, we denote the billiard deformation by $K(\alpha)$, so all of its characteristics will be denoted $d_j(\alpha), k_j(\alpha)$, etc. The initial open billiard is $K(0)$ so all of its characteristics will be denoted $d_j(0)$, etc.







\label{alpha}

\subsection{The higher derivatives of billiard characteristics}

\label{higher}

Let $K(\alpha)$ be a $\mathcal{C}^{r,r'}$ billiard deformation as in {{ Definition \ref{43}}} with $r \geq 4 ,r' \geq 2$. Recall that $\partial K_{\xi_j}(\alpha)$ is parametrized by arc-length $u_j$and $q_j(\alpha)= \varphi_j(u_j(\alpha),\alpha) \in \partial K_{\xi_j}(\alpha)$. Here, we state some corollaries related to bounds of the higher derivatives of curvature, distance and collision angle of a billiard deformation. These corollaries are forthright consequences of condition 4 in {{Definition \ref{43}}}.


\begin{cor}
Let $K(\alpha)$ be a $\mathcal{C}^{r,r'}$ billiard deformation with $r \geq 4 ,r' \geq 2$. Then the curvature $\kappa_j(\alpha)$ at $q_j(\alpha)$ is $\mathcal{C}^{n}$, where $n = \min\{r-3,r'-1\}$ with respect to $\alpha$ and there exist constants 
$C _\kappa ^{(n)} > 0 $ depending only on $n$ such that 
$$ \Big | \diff[n]{\kappa}{\alpha}\Big | \leq C_\kappa ^ {(n)}.$$

\label{kappa}

\end{cor}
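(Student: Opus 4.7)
The plan is to express $\kappa_j(\alpha)$ explicitly in terms of the parametrizing function $\varphi_j$ and the implicit function $u_j(\alpha)$, and then to invoke Faà di Bruno's formula together with the uniform bounds supplied by condition 4 of Definition \ref{43} and by Theorem \ref{3}. Since $\partial K_{\xi_j}(\alpha)$ is parametrized by arc length via $\varphi_j(\cdot,\alpha)$, the (signed) curvature at the point $\varphi_j(u,\alpha)$ is
\begin{equation*}
\kappa(u,\alpha) \;=\; \det\bigl(\partial_u \varphi_j(u,\alpha),\; \partial_u^2 \varphi_j(u,\alpha)\bigr),
\end{equation*}
so that $\kappa_j(\alpha) = \kappa(u_j(\alpha), \alpha)$.

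First I would note that condition 4 of Definition \ref{43} gives uniform bounds on the mixed partial derivatives $\partial_u^l \partial_\alpha^{l'}\varphi_j$ for all admissible $(l,l')$. Differentiating the determinant formula above then yields uniform bounds on the mixed partials $\partial_u^a \partial_\alpha^b \kappa(u,\alpha)$ whenever $a \leq r-2$ and $b \leq r'$, with bounds that are polynomial combinations of the constants $C_\varphi^{(l,l')}$. Next, Theorem \ref{3} supplies the $\mathcal{C}^{\min\{r-1,r'-1\}}$-regularity of $u_j(\alpha)$ together with uniform bounds $|u_j^{(k)}(\alpha)| \leq C_u^{(k)}$ on its derivatives up to that order.

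Then I would apply Faà di Bruno's formula in the form for $\frac{d^n}{d\alpha^n}$ of the composition of a scalar function with a two-variable function. This expresses $\diff[n]{\kappa_j}{\alpha}$ as a finite sum of products of factors of the form $\partial_u^a \partial_\alpha^b \kappa(u_j(\alpha),\alpha)$ and monomials in the derivatives $u_j^{(k)}(\alpha)$, subject to the combinatorial constraint that the total differentiation order is $n$. For $n = \min\{r-3, r'-1\}$ every factor appearing in this sum falls inside the admissible range identified above, and is therefore uniformly bounded. Defining $C_\kappa^{(n)}$ to be the resulting (finite) sum of products of the constants $C_\varphi^{(l,l')}$ and $C_u^{(k)}$ delivers the asserted bound.

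The main — though routine — obstacle will be the combinatorial bookkeeping in Faà di Bruno's formula with two independent variables, together with matching the exact exponent losses $r-3$ and $r'-1$ to the worst-case multi-index appearing in the composition. The substantive input, namely the existence and uniform control of the derivatives of $u_j(\alpha)$, has already been established in Theorem \ref{3}, so the corollary follows by a direct, if somewhat tedious, chain-rule computation.
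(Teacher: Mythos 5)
Your proposal is correct and follows essentially the same route as the paper: write $\kappa_j(\alpha)=\kappa(u_j(\alpha),\alpha)$ with $\kappa$ built from the $u$-derivatives of the arc-length parametrization $\varphi_j$, then differentiate in $\alpha$ via the chain rule and bound every factor using condition 4 of Definition \ref{43} and Theorem \ref{3}. The only cosmetic differences are that you use the determinant form of the curvature and invoke Fa\`a di Bruno explicitly, whereas the paper takes $\kappa_j=\partial^2\varphi_j/\partial u_j^2$, computes the first and second derivatives by hand, and finishes by induction.
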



\begin{proof}[Proof.]
Suppose $K(\alpha)$ is a a $\mathcal{C}^{r,r'}$ billiard deformation with $r \geq 3 ,r' \geq 1$. Since $\partial K_j(\alpha)$ is paramitrized by arc-length $u_j$, then the curvature of $\partial K_j(\alpha)$ at $q_j(\alpha) = \varphi_j(u_j(\alpha), \alpha)$ is $\kappa_j=\frac{\partial ^2 \varphi_j}{\partial u_j^2}$, for $j=0,1,...,m$. Then $\kappa_j(\alpha)$ is $\mathcal{C}^{\min\{r-3,r'-1\}}$ with respect to $\alpha$.

\bigskip

For the first derivative, we have 

\begin{equation*}
\begin{aligned}
\Big |\frac{d\kappa_j}{d\alpha} \Big|&= \Big|\frac{\partial^3 \varphi_j}{\partial u^3_j} \frac{\partial u_j}{\partial \alpha} + \frac{\partial^3 \varphi_j }{\partial u^2_j \partial \alpha}\Big| \leq C_{\kappa}^{(1)},\\
\end{aligned}
\end{equation*}

\noindent this estimate was obtained in \cite{W2}. Next, we continue to estimate the second derivative, so we have  

\begin{equation*}
\begin{aligned}
\Big |\frac{d^2\kappa_j}{d\alpha^2} \Big|&= \Big |\frac{\partial^4 \varphi_j}{\partial u^4_j} \big (\frac{ \partial u_j}{\partial \alpha} \big)^2 + \frac{\partial^3 \varphi_j}{\partial u^3_j} \frac{\partial u^2_j}{\partial \alpha^2}    + 2\frac{\partial^4 \varphi_j }{\partial u^3_j \partial \alpha }  \frac{ \partial u_j}{\partial \alpha} +\frac{\partial^4 \varphi_j }{\partial u^2_j \partial \alpha^2 }  \Big|.\\
  \end{aligned}
\end{equation*} 

\bigskip

\noindent By using condition 4 in {{Definition \ref{43}}} and Theorem \ref{3}, there exists a constant $C_{\kappa}^{(2)}>0$ such that 

\bigskip

\begin{equation*}
\begin{aligned}
\Big |\frac{d^2\kappa_j}{d\alpha^2} \Big| &\leq  {C}_\varphi^{(4,0) } ({C}_u^{(1)})^2 + {C}_\varphi^{(3,0) }{C}_u^{(2)}  + 2 {C_\varphi}^{(3,1)} {C}_u^{(1)} +  {C}_\varphi^{(2,2)} = C_{\kappa}^{(2)}.
\end{aligned}
\end{equation*}

\noindent  Continuing by induction we see that the $n$-th derivative, where $n = \min\{r-3, r'-1\}$, is bounded by a constant $C_\kappa^{(n)}> 0$ which depends only on $n$ such that $ \Big | \diff[n]{\kappa}{\alpha}\Big | \leq C_\kappa ^ {(n)}.$
\end{proof}


\begin{cor}
Let $K(\alpha)$ be a $\mathcal{C}^{r,r'}$ billiard deformation with $r \geq 3 , r' \geq 1$. Then the distance $d_j (\alpha)$ between two points $q_{j+1}(\alpha)$ and $ q_{j}(\alpha) $ is $\mathcal{C}^{n}$, where $n = \min\{r-1,r'-1\}$ with respect to $\alpha$ and there exist constants 
$C _d ^{(n)} > 0$ depending only on $n$ such that 
$$ \Big | \diff[n]{d_j}{\alpha} \Big | \leq C_d ^ {(n)}.$$

\label{distance}

\end{cor}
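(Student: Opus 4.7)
The plan is to mirror the argument used for $\kappa_j(\alpha)$ in the previous corollary, with the added step of handling the square root that appears in $d_j$. Write
$$d_j(\alpha) = \|q_{j+1}(\alpha) - q_j(\alpha)\|, \qquad q_j(\alpha) = \varphi_j(u_j(\alpha),\alpha),$$
and first consider the squared distance $D_j(\alpha) := d_j(\alpha)^2 = \langle q_{j+1}(\alpha) - q_j(\alpha),\, q_{j+1}(\alpha) - q_j(\alpha)\rangle$. By Corollary \ref{Dq}, each $q_j(\alpha)$ is $\mathcal{C}^{n}$ with $\|\diff[k]{q_j}{\alpha}\| \leq C_q^{(k)}$ for $0 \leq k \leq n$, where $n = \min\{r-1,r'-1\}$. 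Since $D_j$ is a polynomial (of degree $2$) in the components of $q_{j+1}(\alpha) - q_j(\alpha)$, Leibniz's rule immediately yields that $D_j \in \mathcal{C}^n$ and that all of its derivatives up to order $n$ are bounded by constants depending only on $n$ and the $C_q^{(k)}$.

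Next, I would pass from $D_j$ to $d_j = \sqrt{D_j}$. Because $d_{\min}^2 \leq D_j(\alpha) \leq d_{\max}^2$ uniformly in $j$, the map $t \mapsto \sqrt{t}$ is smooth on a neighbourhood of the range of $D_j$, so $d_j \in \mathcal{C}^n$. To get explicit bounds on $\big|\diff[n]{d_j}{\alpha}\big|$, differentiate the identity $d_j^2 = D_j$ implicitly: for $n=1$,
$$\diff{d_j}{\alpha} = \frac{1}{2 d_j}\diff{D_j}{\alpha},$$
and higher orders proceed inductively, at each step solving for $\diff[n]{d_j}{\alpha}$ in terms of $D_j^{(k)}$ for $k\leq n$ and lower-order derivatives $d_j^{(\ell)}$ for $\ell < n$, all divided by a power of $d_j \geq d_{\min}$. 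Equivalently, one applies Fa\`a di Bruno's formula to $\sqrt{\,\cdot\,}\circ D_j$ and bounds each summand using the $D_j$-derivatives together with $1/d_{\min}$. This gives $\big|\diff[n]{d_j}{\alpha}\big|\leq C_d^{(n)}$ with $C_d^{(n)}$ depending only on $n$, on the previously established constants $C_q^{(k)}$ for $k\leq n$, and on $d_{\min}$.

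The main obstacle is purely combinatorial bookkeeping: one must verify that no derivative of order higher than $r$ in $u$, nor higher than $r'$ in $\alpha$, of any $\varphi_j$ is ever required, so that condition (4) of Definition \ref{43} applies and the hypothesis $r\geq 3, r'\geq 1$ suffices. Since $n = \min\{r-1,r'-1\}$ and Theorem \ref{3} guarantees that $u_j \in \mathcal{C}^{n}$ with bounded derivatives, the chain rule applied to $q_j(\alpha) = \varphi_j(u_j(\alpha),\alpha)$ only calls for mixed partials of $\varphi_j$ of orders $(l,l')$ with $l+l'\leq n$, all of which lie within the range covered by the constants $C_\varphi^{(l,l')}$. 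Once this verification is in place, the bounds $C_d^{(n)}$ follow by the same pattern as in the proof of Corollary \ref{kappa}: expand via chain/Leibniz/Fa\`a di Bruno, bound each factor using $C_\varphi^{(l,l')}$, $C_u^{(k)}$, and $1/d_{\min}$, and sum finitely many terms.
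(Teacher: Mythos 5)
Your proof is correct, but it is organized differently from the paper's. The paper differentiates $d_j(\alpha)=\|q_{j+1}(\alpha)-q_j(\alpha)\|$ directly: the first derivative is written as the inner product of the unit vector $(q_{j+1}-q_j)/\|q_{j+1}-q_j\|$ with the $\alpha$-derivative of the difference, the second derivative is then expanded term by term and bounded via condition 4 of Definition \ref{43} and Theorem \ref{3}, and the general order is left to induction, exactly as in the proof of Corollary \ref{kappa}. You instead pass through the squared distance $D_j=d_j^2$, which is a quadratic expression in the components of $q_{j+1}-q_j$ and hence is $\mathcal{C}^n$ with derivatives bounded by Leibniz's rule and Corollary \ref{Dq}, and then recover $d_j=\sqrt{D_j}$ via Fa\`a di Bruno, using the uniform separation $D_j\geq d_{\min}^2>0$. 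Both arguments rest on the same inputs (the constants $C_\varphi^{(l,l')}$, $C_u^{(k)}$, and $d_{\min}$), and your verification that only mixed partials of $\varphi_j$ of order $(l,l')$ with $l\leq r$, $l'\leq r'$ ever arise is the same counting the paper relies on implicitly. Your decomposition has a small advantage worth noting: when one differentiates the norm directly, the second and higher derivatives acquire extra terms from differentiating the unit vector $(q_{j+1}-q_j)/d_j$ (these carry denominators $d_j$ and $d_j^3$ and are controlled by $d_j\geq d_{\min}$); the paper's displayed expression for $\diff[2]{d_j}{\alpha}$ in fact omits them, which does not affect the conclusion but makes the induction less transparent, whereas your square-then-root organization collects all such denominators into the single Fa\`a di Bruno step.
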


\begin{proof}[Proof.]
Since $d_j= \|q_{j+1}(\alpha) - q_{j}(\alpha)\| = \| \varphi_{j+1}(u_{j+1}(\alpha), \alpha) - \varphi_{j}(u_j(\alpha), \alpha) \|$ for $j=0,1,...,m$, 
then $d_j$ is $\mathcal{C}^{\min\{r-1, r'-1\}}$. The first derivative is 

\begin{equation*}
\begin{aligned}
 \diff{d_j}{\alpha} &= \Big < \frac{   \varphi_{j+1}(u_{j+1}(\alpha), \alpha) - \varphi_{j}(u_j(\alpha), \alpha) } { \| \varphi_{j+1}(u_{j+1}(\alpha), \alpha) - \varphi_{j}(u_j(\alpha), \alpha) \| }                     , \frac{\partial \varphi_{j+1} }{\partial u_{j+1}} \frac{ \partial u_{j+1}}{\partial \alpha} +  \frac{\partial\varphi_{j+1}}{\partial \alpha}+ \frac{\partial \varphi_{j} }{\partial u_{j}} \frac{ \partial u_{j}}{\partial \alpha} + \frac{\partial\varphi_{j}}{\partial \alpha} \Big > .
\end{aligned}
\end{equation*}

\noindent And then 

\begin{equation*}
\begin{aligned}
\Big | \diff{d_j}{\alpha} \Big |&= \Big |\frac{\partial \varphi_{j+1} }{\partial u_{j+1}} \frac{ \partial u_{j+1}}{\partial \alpha} +  \frac{\partial\varphi_{j+1}}{\partial \alpha}+ \frac{\partial \varphi_{j} }{\partial u_{j}} \frac{ \partial u_{j}}{\partial \alpha} + \frac{\partial\varphi_{j}}{\partial \alpha} \Big |  \leq C_{d}^{(1)},
\end{aligned}
\end{equation*}

\noindent which was estimated in \cite{W2}. For the second derivative, using condition 4 in {{Definition \ref{43}}} and Theorem \ref{3} it follows that

\begin{equation*}
\begin{aligned}
\Big | \diff[2]{d_j}{\alpha} \Big |&= \Big |\frac{\partial^2 \varphi_{j+1} }{\partial u^2_{j+1}} \Big(\frac{ \partial u_{j+1}}{\partial \alpha} \Big )^2  + \frac{\partial \varphi_{j+1}}{\partial u_{j+1}} \frac{ \partial^2u_{j+1}}{\partial \alpha^2}  +2\frac{\partial^2 \varphi_{j+1} }{\partial u_{j+1} \partial \alpha }  \frac{ \partial u_{j+1}}{\partial \alpha} 
+ \frac{\partial^2 \varphi_{j+1}}{\partial \alpha^2} \\
&\, \, \, \, \, \, +\frac{\partial^2 \varphi_{j} }{\partial u^2_{j}} \Big(\frac{ \partial u_{j}}{\partial \alpha} \Big )^2  + \frac{\partial \varphi_{j}}{\partial u_{j}} \frac{ \partial^2u_{j}}{\partial \alpha^2}  +2\frac{\partial^2 \varphi_{j} }{\partial u_{j} \partial \alpha }  \frac{ \partial u_{j}}{\partial \alpha} 
+ \frac{\partial^2 \varphi_{j}}{\partial \alpha^2} \Big |.\\
\end{aligned}
\end{equation*}

\noindent By using condition 4 in {{Definition \ref{43}}} and Theorem \ref{3}, there exists a constant $C_{\kappa}^{(2)}>0$ such that

\begin{equation*}
\begin{aligned}
\Big | \diff[2]{d_j}{\alpha} \Big | &\leq 2 {C}_\varphi^{(2,0) } ({C}_u^{(1)})^2 + 2{C}_u^{(2)} + 4 {C}_\varphi^{(1,1) } ({C}_u^{(1)})^2 +2 {C}_\varphi^{(0,2) }\\
%
%
 &= C_{d}^{(2)}.
\end{aligned}
\end{equation*}

\noindent Continuing by induction, we can see that there exists a constant $C_d^{(n)} >0 $ depends only on $n$ such that $ \Big | \diff[n]{d_j}{\alpha} \Big | \leq C_d ^ {(n)}.$
\end{proof}


\begin{cor}
Let $K(\alpha)$ be a $\mathcal{C}^{r,r'}$ billiard deformation with $r \geq 4 , 
r' \geq 2$. Then $\cos \phi_j (\alpha)$ is $\mathcal{C}^{\min\{r-1, r'-1\}}$ and there exists a constant $C _\phi ^{(n)} > 0$ depending only on $n$ such that 
$$ \Big | \diff[n]{\cos\phi_j}{\alpha} \Big | \leq C_{\phi} ^ {(n)}.$$

\label{angle}

\end{cor}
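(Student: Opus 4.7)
The plan is to follow the exact pattern of the proofs of Corollaries \ref{kappa} and \ref{distance}: I would express $\cos\phi_j(\alpha)$ as a smooth combination of quantities whose derivatives are already under control, and then invoke the chain, product and quotient rules together with an induction on the order of differentiation. The starting point is the formula
$$\cos\phi_j(\alpha) = \Big\langle n_j(\alpha),\; \frac{q_{j+1}(\alpha) - q_j(\alpha)}{d_j(\alpha)} \Big\rangle,$$
where $n_j(\alpha)$ denotes the outward unit normal to $\partial K_{\xi_j}(\alpha)$ at $q_j(\alpha)$ and the second factor is the unit vector of the outgoing segment.

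Next I would verify the regularity class and derive the bound. Because $\partial K_{\xi_j}(\alpha)$ is arc-length parametrised, $n_j(\alpha) = J(\partial\varphi_j/\partial u_j)(u_j(\alpha),\alpha)$, with $J$ the rotation by $\pi/2$. By condition 4 of Definition \ref{43}, $\partial\varphi_j/\partial u_j$ is $\mathcal{C}^{r-1,r'}$ with bounded partials; by Theorem \ref{3}, $u_j(\alpha)$ is $\mathcal{C}^{\min\{r-1,r'-1\}}$ with bounded derivatives; so $n_j(\alpha)$ is $\mathcal{C}^{\min\{r-1,r'-1\}}$ with uniformly bounded derivatives of every order up to $\min\{r-1,r'-1\}$. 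Corollaries \ref{Dq} and \ref{distance} yield the analogous conclusions for $q_{j+1}(\alpha) - q_j(\alpha)$ and $d_j(\alpha)$. Since the no-eclipse condition $({\bf{H}})$ applied to every member of the deformation forces $d_j(\alpha)\geq d_{\min} > 0$ uniformly in $\alpha\in[0,b]$ and in $j$, the reciprocal $1/d_j(\alpha)$ inherits the same regularity, whence $\cos\phi_j(\alpha) \in \mathcal{C}^{\min\{r-1,r'-1\}}$. For the derivative bounds I would proceed by induction on $n$: for $n=1$, Leibniz and the quotient rule reduce $\diff{\cos\phi_j}{\alpha}$ to a finite sum of products of factors each controlled by one of the constants $C_\varphi^{(l,l')}$, $C_u^{(1)}$, $C_q^{(1)}$, $C_d^{(1)}$ or $1/d_{\min}$; for general $n \leq \min\{r-1,r'-1\}$, the general Leibniz formula combined with Fa\`a di Bruno produces a finite sum of terms involving only derivatives of $n_j$, $q_{j\pm 1}$ and $1/d_j$ of order at most $n$, and the derivatives of $1/d_j$ are bounded inductively by differentiating the identity $d_j\cdot(1/d_j)=1$ and solving for $\diff[n]{(1/d_j)}{\alpha}$ in terms of lower-order derivatives of $1/d_j$ and derivatives of $d_j$ up to order $n$. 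Collecting these estimates yields the constant $C_\phi^{(n)} > 0$ depending only on $n$.

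The hard part is purely bookkeeping rather than any new analytic content: combining the chain, product and quotient rules produces many terms, but each is bounded by previously established constants and no new uniformity needs to be proved. The only structural point that must be watched is that the denominator $d_j(\alpha)$ stays bounded away from zero uniformly in $\alpha\in[0,b]$ and in $j$, which is guaranteed by the no-eclipse condition $({\bf{H}})$ being preserved throughout the deformation together with the compactness of $[0,b]$.
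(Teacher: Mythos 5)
Your proof is correct but follows a genuinely different decomposition from the paper's. The paper never introduces the normal vector: it writes $\cos 2\phi_j(\alpha)$ as the normalized inner product of the two consecutive chords $q_{j+1}(\alpha)-q_j(\alpha)$ and $q_j(\alpha)-q_{j-1}(\alpha)$, and then recovers $\cos\phi_j(\alpha)=\sqrt{(\cos 2\phi_j(\alpha)+1)/2}$ via the half-angle identity, reducing everything to condition 4 of Definition \ref{43} and Theorem \ref{3}. You instead pair the unit outgoing direction $(q_{j+1}(\alpha)-q_j(\alpha))/d_j(\alpha)$ with the outward unit normal $n_j(\alpha)=J\,\partial\varphi_j/\partial u_j$, which is available in closed form precisely because the parametrization is by arc length. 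Each route has a denominator that must be kept away from zero: yours needs $d_j(\alpha)\ge d_{\min}>0$ (which you correctly attribute to condition $({\bf H})$ and compactness of $[0,b]$), while the paper's needs $\cos 2\phi_j(\alpha)+1=2\cos^2\phi_j(\alpha)\ge 2\cos^2\phi_{\max}>0$ (again from $({\bf H})$) so that the square root is smooth with bounded derivatives on the relevant range --- a point the paper leaves implicit. Your version arguably makes the required nondegeneracy more transparent and avoids the square root entirely, at the cost of introducing the normal field and its derivatives; both reduce to the same Leibniz/Fa\`a di Bruno bookkeeping over the constants $C_\varphi^{(l,l')}$ and $C_u^{(n)}$, and both yield the stated class $\mathcal{C}^{\min\{r-1,r'-1\}}$ and the uniform bounds $C_\phi^{(n)}$.
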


\begin{proof}[Proof.]

We can write 

\begin{equation*}
\begin{aligned}
\cos {2\phi_j} &= \frac{\big (q_{j+1} (\alpha)- q_j(\alpha) \big ) \cdot \big (q_{j}(\alpha) - q_{j-1} (\alpha)\big)} {|q_{j+1}(\alpha) - q_j (\alpha)| |q_{j}(\alpha) - q_{j-1} (\alpha)|}\\
&= \frac{\big (\varphi_{j+1} (,u_{j+1}, \alpha)- \varphi_j (u_j, \alpha) \big ) \cdot \big (\varphi_{j}(u_j, \alpha) - \varphi_{j-1} (u_{j-1}\alpha)\big)} {|\varphi_{j+1}(u_{j+1}, \alpha) - \varphi_j (u_j, \alpha)| |\varphi_{j}(u_j, \alpha) - \varphi_{j-1} (u_{j-1}, \alpha)|}.
\end{aligned}
\end{equation*}

\noindent And then, $\cos\phi_j (\alpha)= \sqrt {\frac{\cos {2\phi_j(\alpha)}  +1}{2}}$. Therefore, the statement follows from condition 4 in {{Definition \ref{43}}} and Corollary \ref{3}.
\end{proof}


\noindent The next corollary follows from Corollaries \ref{kappa}, \ref{angle}.

\begin{cor}
Let $K(\alpha)$ be a $\mathcal{C}^{r,r'}$ billiard deformation with $r \geq 4 ,r' \geq 2$. Then the expression  $g_j (\alpha)= \frac{2\kappa_j}{\cos\phi_j}$ is $\mathcal{C}^{\min\{r-3,r'-1\}}$ and there exist constants $C _g ^{(n)} > 0$ depending only on $n$ such that $$ \Big | \diff[n]{g_j}{\alpha}\Big | \leq C_g ^ {(n)}.$$

\label{gamma}

\end{cor}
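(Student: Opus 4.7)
The plan is to reduce the statement to the two preceding corollaries (Corollary \ref{kappa} and Corollary \ref{angle}) by treating $g_j = 2\kappa_j \cdot (1/\cos\phi_j)$ as a product and using Leibniz's rule. First I would observe that the regularity claim follows immediately: since $\kappa_j(\alpha)$ is $\mathcal{C}^{\min\{r-3,r'-1\}}$ by Corollary \ref{kappa} and $\cos\phi_j(\alpha)$ is $\mathcal{C}^{\min\{r-1,r'-1\}}$ by Corollary \ref{angle}, their quotient is $\mathcal{C}^{n}$ with $n=\min\{r-3,r'-1\}$, provided the denominator is bounded away from zero. The latter is guaranteed by the no-eclipse condition \textbf{(H)}, which was noted in Section \ref{1} to yield a uniform bound $0\leq \phi_j(\alpha) \leq \phi_{\max} < \frac{\pi}{2}$, so $\cos\phi_j(\alpha) \geq \cos\phi_{\max} > 0$.

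Next I would establish bounds for $\diff[n]{}{\alpha}(1/\cos\phi_j)$. Setting $h_j(\alpha) = 1/\cos\phi_j(\alpha)$, the chain/quotient rule (or Fa\`a di Bruno's formula applied to $x \mapsto 1/x$ composed with $\cos\phi_j$) expresses $\diff[n]{h_j}{\alpha}$ as a finite sum of products of the derivatives $\diff[k]{\cos\phi_j}{\alpha}$ for $k \leq n$, divided by some positive power of $\cos\phi_j$. Each factor in the numerator is bounded by $C_\phi^{(k)}$ by Corollary \ref{angle}, and the denominator is at least $(\cos\phi_{\max})^{n+1}$. This yields a constant $C_{h}^{(n)} > 0$, depending only on $n$, $\phi_{\max}$ and the $C_\phi^{(k)}$, such that $|\diff[n]{h_j}{\alpha}| \leq C_{h}^{(n)}$.

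Finally I would apply Leibniz's rule to $g_j = 2\kappa_j h_j$:
\begin{equation*}
\diff[n]{g_j}{\alpha} = 2 \sum_{k=0}^{n} \binom{n}{k} \diff[k]{\kappa_j}{\alpha} \, \diff[n-k]{h_j}{\alpha}.
\end{equation*}
Using Corollary \ref{kappa} and the bound on $\diff[n-k]{h_j}{\alpha}$ derived above, each term is bounded, and summing gives a constant
\begin{equation*}
C_g^{(n)} = 2 \sum_{k=0}^{n} \binom{n}{k} C_\kappa^{(k)} C_h^{(n-k)},
\end{equation*}
depending only on $n$, for which $|\diff[n]{g_j}{\alpha}| \leq C_g^{(n)}$, completing the proof. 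The argument is routine; the only subtlety is the uniform lower bound on $\cos\phi_j$, which is why condition \textbf{(H)} is essential. I expect no real obstacle here, as the main analytic work was already done in Corollaries \ref{kappa} and \ref{angle}.
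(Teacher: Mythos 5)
Your proposal is correct and matches the paper's intent exactly: the paper simply asserts that the corollary ``follows from Corollaries \ref{kappa}, \ref{angle}'' without writing out the details, and your Leibniz/Fa\`a di Bruno argument, together with the uniform lower bound $\cos\phi_j \geq \cos\phi_{\max} > 0$ coming from condition \textbf{(H)}, supplies precisely the omitted verification.
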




\noindent The next corollary concerning the curvature $k_j$, defined in (\ref{k(alpha)}), follows from Corollaries \ref{distance} and \ref{gamma}.

\begin{cor}
Let $K(\alpha)$ be a $\mathcal{C}^{r,r'}$ billiard deformation with $r \geq 4 ,r' \geq 2$. Then the curvature  $k_j(\alpha)$ is $\mathcal{C}^{n}$, where $n = \min\{r-3,r'-1\}$ and here exist constants $C_k^{(n)}$ depending only on $n$ such that
$$ \Big | \diff[n]{k_j}{\alpha}\Big | \leq C_k ^ {(n)}. $$

\label{k_j}

\end{cor}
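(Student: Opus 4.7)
The plan is to combine the recurrence (\ref{k(alpha)}) with an induction on the order $n$ of the derivative, using the contraction property of the map $k \mapsto k/(1+d_j k)$ to obtain bounds that are uniform in $j$. Regularity is straightforward: since $k_{j+1}(\alpha)$ is built from $k_j(\alpha)$, $d_j(\alpha)$ and $g_{j+1}(\alpha)$ via smooth operations, induction on $j$ combined with Corollaries \ref{distance} and \ref{gamma} immediately yields that $k_j(\alpha)$ is $\mathcal{C}^n$ with $n=\min\{r-3,r'-1\}$. The real work is the uniform bound.

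Set $h_j(\alpha)=1+d_j(\alpha)k_j(\alpha)$ and $f_j(\alpha,k)=k/(1+d_j(\alpha)k)$, so that (\ref{k(alpha)}) reads $k_{j+1}(\alpha)=f_j(\alpha,k_j(\alpha))+g_{j+1}(\alpha)$. Using $k_{\min}\leq k_j(\alpha)\leq k_{\max}$ and $d_j(\alpha)\geq d_{\min}$ (see the last paragraph of Section \ref{STd}), one has $h_j(\alpha)\geq 1+d_{\min}k_{\min}$, hence
\begin{equation*}
\partial_k f_j(\alpha,k_j(\alpha))=\frac{1}{h_j(\alpha)^2}\leq \theta^{2}<1,\qquad \theta=\frac{1}{1+d_{\min}k_{\min}}.
\end{equation*}
A direct computation for $n=1$ (using $h_j-k_jd_j=1$) gives the clean identity
\begin{equation*}
k'_{j+1}(\alpha)=\frac{k'_j(\alpha)}{h_j(\alpha)^{2}}-\frac{k_j(\alpha)^{2}\,d'_j(\alpha)}{h_j(\alpha)^{2}}+g'_{j+1}(\alpha),
\end{equation*}
so $|k'_{j+1}|\leq \theta^{2}|k'_j|+M_1$ with $M_1=k_{\max}^{2}C_d^{(1)}+C_g^{(1)}$. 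Iterating this scalar linear contraction yields a uniform bound $|k'_j(\alpha)|\leq M_1/(1-\theta^{2})+|k'_0|$ independent of $j$.

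For general $n$ I would apply Faà di Bruno to $\frac{d^n}{d\alpha^n}f_j(\alpha,k_j(\alpha))$. The highest-order term that involves $k_j^{(n)}$ comes from the linear piece $\partial_k f_j\cdot k_j^{(n)}$, and all other terms are polynomials in the derivatives of $d_j$, $g_{j+1}$, and in the derivatives $k_j^{(l)}$ of order $l<n$. Using Corollaries \ref{distance}, \ref{gamma} together with the inductive hypothesis (that $|k_j^{(l)}|\leq C_k^{(l)}$ uniformly in $j$ for $l<n$), the remainder is bounded by some constant $M_n$ depending only on $n$ and the previously established constants. Therefore
\begin{equation*}
\bigl|k_{j+1}^{(n)}(\alpha)\bigr|\leq \theta^{2}\bigl|k_j^{(n)}(\alpha)\bigr|+M_n,
\end{equation*}
and again by the contraction $\theta^{2}<1$ one obtains a uniform bound $|k_j^{(n)}(\alpha)|\leq C_k^{(n)}$ independent of $j$.

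The main obstacle is organising the Faà di Bruno expansion so that it is transparent that the only appearance of $k_j^{(n)}$ is through the factor $\partial_k f_j=1/h_j^{2}$; this is what delivers the contraction and hence the uniformity in $j$. Once this structural point is established, the double induction (outer on $n$, inner on $j$) closes cleanly and produces the constants $C_k^{(n)}$ depending only on $n$, $\theta$, $k_{\max}$, and the constants $C_d^{(l)}$, $C_g^{(l)}$ for $l\leq n$.
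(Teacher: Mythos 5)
Your core mechanism is the same as the paper's: differentiate the recurrence (\ref{k(alpha)}) $n$ times, observe that $k_j^{(n)}$ enters the expression for $k_{j+1}^{(n)}$ only through the linear factor $1/(1+d_jk_j)^2 \le \beta_{\max} = 1/(1+d_{\min}k_{\min})^2 < 1$ (your $\theta^2$), bound all remaining terms by the constants from Corollaries \ref{distance} and \ref{gamma} together with the inductive bounds on lower-order derivatives, and close with an outer induction on $n$. The one substantive divergence is how the resulting affine recursion is solved. You iterate forward from $j=0$, which leaves the term $\theta^{2j}\,\bigl|k_0^{(n)}(\alpha)\bigr|$ in your bound; but $k_0(\alpha)$ is the curvature of the unstable manifold at the base point, determined by the entire backward itinerary, so its $\mathcal{C}^n$ regularity in $\alpha$ and the finiteness of $\bigl|k_0^{(n)}\bigr|$ are not consequences of the forward recurrence and are left unproved in your argument (the same remark applies to your ``regularity is straightforward'' paragraph, which also silently assumes $k_0(\alpha)$ is $\mathcal{C}^n$). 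The paper sidesteps this by assuming the orbit is $m$-periodic, so that $k_m^{(n)}=k_0^{(n)}$ closes the linear system; solving it gives $\bigl|k_m^{(n)}\bigr|\le \eta_{\max}/(1-\beta_{\max})$ with no reference to any initial derivative (though the paper does not explain how to pass from periodic orbits to general points of the non-wandering set). If you want to avoid the periodicity restriction, the natural repair of your version is to run the same contraction along the backward orbit: seed the recurrence at step $-N$ with an arbitrary convex curve of uniformly bounded curvature, note that the influence of the seed on $k_0^{(n)}$ decays like $\theta^{2N}$, and let $N\to\infty$; this simultaneously establishes the regularity of $k_0(\alpha)$ and yields the uniform bound. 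As written, your proof has a genuine loose end at exactly the point where the paper inserts its periodicity hypothesis.
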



\begin{proof}[Proof.]

First, we recall 

\begin{equation*}
k_{j+1}(\alpha) = \frac{k_j(\alpha)}{ 1 + d_j(\alpha) k_j(\alpha)} + 2 \frac{\kappa_{j+1}(\alpha)}{cos\phi_{j+1}(\alpha)} \:\:\:\: , \:\:\: 0\leq j \leq m-1\;.
\end{equation*}

\noindent We will write $k_{j+1}(\alpha)$ simply as follows 

\begin{equation*}
k_{j+1} (\alpha)= \frac{k_j(\alpha)}{ 1 + d_j(\alpha) k_j(\alpha)} + g_{j+1}(\alpha),
\end{equation*}

\noindent where $g_{j+1}(\alpha) = \frac{2\kappa_{j+1}}{\cos\phi_{j+1}}$.  \cite{W2} contains an estimate that the first derivative of $k_j(\alpha)$ with respect to $\alpha$ is bounded by a constant $C_k^{(1)}$. Here, we use the same argument in \cite{W2} and show that the second derivative of $k_j(\alpha)$ with respect to $\alpha$ is also bounded. These estimates are useful and will be used later in Section \ref{Con.}.  

\bigskip

Next, we start with the first derivative of $k_{j+1}$ with respect to $\alpha$ and we will use the notation $\dot{k}$, $\ddot{k}$,...etc. to simplify equations. So, we have

\begin{equation*}
\begin{aligned}
\dot{k}_{j+1} &= \frac{\dot{k}_j } {(1+d_j k_j)^2 } - \frac{ \dot{d}_j k_j^2} { (1+d_j k_j)^2} + \dot{g}_{j+1} .\\
%
%
\end{aligned}
\end{equation*} 

\noindent And for the second derivative, we have
 
\begin{equation*}
\ddot{k}_{j+1} = \frac{\ddot{k}_j } {(1+d_j k_j)^2 } - \frac{k^2_j (\ddot{d}_j + \ddot{d}_j d_j k_j - 2 \dot{d}^2_j k_j) + 2\dot{k}_j(\dot{k}_j d_j + 2 \dot{d}_j k_j )} { (1+d_j k_j)^3}  + \ddot{g}_{j+1}.
\label{Dk}
\end{equation*}

%


\noindent Let 

\begin{equation*}
\begin{aligned} 
\beta_j &= \frac { 1} {(1+d_j k_j)^2 }, \\
\eta_j &=  - \frac{k^2_j (\ddot{d}_j + \ddot{d}_j d_j k_j - 2 \dot{d}^2_j k_j) + 2\dot{k}_j(\dot{k}_j d_j +2 \dot{d}_j k_j )} { (1+d_j k_j)^3}  + \ddot{g}_{j+1}  \:\:\:\: , \:\:\: 0\leq j \leq m-1\;.
\end{aligned}
\end{equation*}

\noindent From Corollaries \ref{distance} and \ref{gamma}, and the estimate of $\dot{k}_j$, we have

\begin{equation*}
\begin{aligned} 
|\beta_j |  \leq \beta_{\max} &= \frac { 1} {(1+d_{\min} k_{\min})^2 }, \\
%
 |\eta_j |   \leq \eta_{\max} & =\frac{k_{\max}^2(C_d^{(2)} + C_d^{(2)} d_{\max}k_{\max}  + 2 (C_d^{(1)})^2 k_{\max})} {(1+d_{\min} k_{\min} )^3 }\\
 &+ \frac{2C_k^{(1)}(C_k^{(1)} d_{\max} + 2C_d^{(1)}k_{\max})}{(1+d_{\min} k_{\min} )^3 }  + C_g ^{(2)}.
\end{aligned}
\end{equation*}

\noindent Then, we have

\begin{equation*}
\begin{aligned} 
\ddot{k}_m (\alpha)  &= \eta_{m-1} + \beta_{m-1} \ddot{k}_{m-1} (\alpha) \\
&= \eta_{m-1} + \beta_{m-1}\, \eta_{m-2} + .... + \beta_{m-1} .... \beta_{1}\,\eta_{0} + \beta_{m-1} .... \beta_{0} \, \ddot{k}_0(\alpha).   \\
\end{aligned} 
\end{equation*}


%



%

\noindent To solve this equation, we assume that $(q(\alpha), v(\alpha))$ is periodic such that $B_\alpha ^m(q(\alpha), v(\alpha))= (q(\alpha), v(\alpha))$. Then $k_m(\alpha) = k_0(\alpha)$. From this, we can solve the previous equation as follows

\begin{equation*}
\begin{aligned} 
\ddot{k}_m(\alpha) - \beta_{m-1} .... \beta_{0} \, \ddot{k} (\alpha) &= \eta_{m-1} + \beta_{m-1}\, \eta_{m-2} + .... + \beta_{m-1} .... \beta_{1}\,\eta_{0}  \\
\ddot{k}_m(\alpha) & = \frac{ 1} {1- \beta_{m-1} .... \beta_{0} } \Big ( \eta_{m-1} + \beta_{m-1}\, \eta_{m-2} + .... + \beta_{m-1} .... \beta_{1} \Big)\\
\end{aligned} 
\end{equation*}

\noindent By the maximum value of $\eta_j$ and $\beta_i$, we have 

\begin{equation*}
\begin{aligned} 
| \ddot{k}_m(\alpha) | & \leq \frac{ \eta_{\max}} {1- \beta^m_{\max} } \Big (  1+ \beta_{\max} + .... +\beta_{\max}^{m-1}   \Big)\\
&= \frac{ \eta_{\max}} {1- \beta^m_{\max} } \Big (\frac{1 - \beta^{m}_{\max}} {1- \beta_{\max} } \Big) \\
& = \frac{ \eta_{\max}} {1- \beta_{\max} }.
\end{aligned} 
\end{equation*}

\noindent This means there exists a constant $C_k^{(2)} > 0$ does not depend on $m$ or $\alpha$ such that $|\ddot{k}_j(\alpha) | \leq C_k^{(2)}$, for every $j =0,1, ..., m$. Continuing by induction we can see that the $n$-th derivative of $k_j(\alpha)$ with respect to $\alpha$ is bounded by constant $C_k^{(n)} >0$ that depending only on $n$. 
\end{proof}



%
%
%
%


%
%




\section{{\large{Continuity of the largest Lyapunov exponent}}}

\label{Con.}

In this section, we show that the largest Lyapunov exponent $\lambda_1$ depends continuously on a planar billiard deformation. Let $K(\alpha)$ be a billiard deformation as defined in {{Definition \ref{43}}} and let $K(0)$ be the initial open billiard. 
Let $k_j(\alpha), k_j(0)$ and $d_j(\alpha), d_i(0)$ be the curvatures and the distances that are described in section \ref{alpha}.  

\bigskip

For every $\alpha \in [0,b]$, let $M_\alpha$ be the non-wandering set for the billiard map
and let $R_\alpha : M_\alpha \longrightarrow \Sigma$ be the analogue of the conjugacy map $R: M_0\longrightarrow \Sigma$,
so that the following diagram  is commutative:
$$\def\normalbaselines{\baselineskip20pt\lineskip3pt \lineskiplimit3pt}
\def\mapright#1{\smash{\mathop{\longrightarrow}\limits^{#1}}}
\def\mapdown#1{\Big\downarrow\rlap{$\vcenter{\hbox{$\scriptstyle#1$}}$}}
\begin{matrix}
M_\alpha &\mapright{B_{\alpha}}& M_\alpha\cr 
\mapdown{R_\alpha}& & \mapdown{R_\alpha}\cr \Sigma &\mapright{\sigma}& \Sigma
\end{matrix}
$$ 
where $B_\alpha$ is the billiard ball map on $M_\alpha$. By Theorem \ref{OSE} there exists a subset $A_\alpha$ of $\Sigma$
with $\mu(A_\alpha) = 1$ so that 
\begin{equation}
\lambda_1(\alpha) = \lim_{m\to\infty} \frac{1}{m} \log \| D_{x_0}B^m_\alpha(w)\|
\label{B_alpha}
\end{equation}
for all $x \in M_\alpha$ with $R_\alpha(x) \in A_\alpha$. Similarly, let $A_0$ be the set with $\mu(A_0) = 1$
which we get from Theorem \ref{OSE} for $\alpha = 0$.

\bigskip

\begin{lem}
Given an arbitrary sequence 
\begin{equation*}
\alpha_1, \alpha_2, \ldots, \alpha_{p}, \ldots 
\end{equation*} 
of elements of $[0,b]$, for 
$\mu$-almost all $\xi\in \Sigma$ the formula {\rm{(\ref{B_alpha})}} is valid for $\alpha = \alpha_p$ and $x = R_{\alpha}^{-1}(\xi)$ for all $p = 1,2, \ldots$
and also for $\alpha = 0$ and $x = R^{-1}(\xi)$.

\label{alpha_tilde{k}}

\end{lem}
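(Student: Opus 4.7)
The plan is to exploit the countability of the sequence $\{\alpha_p\}$ together with the fact that Oseledets theorem (Theorem \ref{OSE}) provides, for each fixed parameter, a full $\mu$-measure set of symbolic sequences on which the Lyapunov formula holds. Since a countable intersection of full-measure sets still has full measure, the desired conclusion follows from a direct set-theoretic argument.

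More precisely, I would first note that for every fixed $p \geq 1$, the billiard deformation $K(\alpha_p)$ produces its own billiard map $B_{\alpha_p}: M_{\alpha_p} \to M_{\alpha_p}$ and conjugacy $R_{\alpha_p}: M_{\alpha_p} \to \Sigma$, and $\mu$ is $\sigma$-invariant and ergodic on $\Sigma$. Applying Theorem \ref{OSE} in this setup furnishes a set $A_{\alpha_p} \subset \Sigma$ with $\mu(A_{\alpha_p}) = 1$ such that (\ref{B_alpha}) holds with $\alpha = \alpha_p$ and $x = R_{\alpha_p}^{-1}(\xi)$ for every $\xi \in A_{\alpha_p}$. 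The analogous set $A_0 \subset \Sigma$ with $\mu(A_0) = 1$ is produced for $\alpha = 0$.

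I would then form the countable intersection
\begin{equation*}
A \;=\; A_0 \,\cap\, \bigcap_{p=1}^{\infty} A_{\alpha_p}.
\end{equation*}
Since each of the sets in the intersection has $\mu$-measure $1$ and the collection is countable, one has $\mu(A) = 1$. For any $\xi \in A$, the formula (\ref{B_alpha}) is simultaneously valid for $\alpha = 0$ with $x = R^{-1}(\xi)$ and for every $\alpha = \alpha_p$ with $x = R_{\alpha_p}^{-1}(\xi)$, which is exactly the conclusion of the lemma.

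There is no genuine obstacle here; the proof is essentially bookkeeping. The only point to emphasize is that the common symbol space $\Sigma$ and the common invariant measure $\mu$ play the role of a fixed reference object, while the representation maps $R_{\alpha_p}$ vary with the parameter. This is what allows us to move the full-measure exceptional sets back to $\Sigma$ and take a countable intersection there; had the exceptional sets been taken in the distinct phase spaces $M_{\alpha_p}$, no such intersection would make sense. The sequence $\{\alpha_p\}$ must be countable for this argument to go through — which is exactly the hypothesis of the lemma and precisely what will be needed in the continuity proof of Theorem \ref{con.}, where it is invoked for sequences $\alpha_p \to \alpha$.
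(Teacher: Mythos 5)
Your proof is correct and coincides with the paper's own argument: both form the countable intersection $A = A_0 \cap \bigcap_{p=1}^{\infty} A_{\alpha_p}$ of the full-measure sets supplied by Theorem \ref{OSE} and observe that it still has $\mu$-measure $1$. No further comment is needed.
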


\begin{proof}[Proof.]

The set $A = A_0 \cap \cap_{p=1}^\infty A_{\alpha_{p}}$  has $\mu(A) = 1$ since
$$\Sigma\setminus A = (\Sigma \setminus A_0) \cup \cup_{{p}}^\infty (\Sigma\setminus A_{\alpha_{p}})$$
has measure zero as a countable union of sets of measure zero. If $\alpha = \alpha_p$ for some $p$
and $R_\alpha(x) \in A$, then $R_\alpha(x) \in A_{\alpha_{p}}$ so formula (\ref{B_alpha}) holds. Similarly (\ref{B_alpha}) holds for $\alpha = 0$ as well.
\end{proof}

Thus, using the notation $x(0,\alpha) \in M_\alpha$, we can choose $\xi \in \Sigma$ so that 
formula (\ref{B_alpha}) applies for $\alpha = \alpha_{p}$ and $x = x (0,\alpha_{p})$ for all ${p} = 1, 2, \ldots$, and also for $\alpha = 0$
and $x = (0,0)$.

\bigskip

From the formula for the largest Lyapunov exponent (\ref{g}), we can write the Lyapunov exponents for $K(\alpha)$ and $K(0)$ as follows: 

\begin{equation*}
\begin{aligned}
\lambda_1 (\alpha)&=\lim_{m\to \infty}  \frac{1}{m} \sum_{j=1}^{m} \log {\Big(1+d_j (\alpha) k_j (\alpha)\Big)}= \lim_{m\to \infty} \lambda^{(m)}_1 (\alpha), \\
 \lambda_1 (0)&=  \lim_{m\to \infty}  \frac{1}{m} \sum_{j=1}^{m} \log {\Big(1+d_j (0) k_j (0)\Big)}= \lim_{m\to \infty} \lambda^{(m)}_1 (0),
\end{aligned}
\end{equation*}

\noindent where

\begin{equation}
\begin{aligned}
\lambda^{(m)}_1 (\alpha)=  \frac{1}{m} \sum_{j=1}^{m} \log {\Big(1+d_j (\alpha) k_j (\alpha)\Big)}  \\
{\rm{and}} \, \, \,\, \,  
\lambda^{(m)}_1 (0)=   \frac{1}{m} \sum_{j=1}^{m} \log {\Big(1+d_j (0) k_j (0)\Big)}.\\
\label{lambda_k}
\end{aligned}
\end{equation}



\noindent Now, we prove {\bf {Theorem \ref{con.}}}

\begin{proof}[Proof of {\rm {Theorem \ref{con.}:}}]

Let $K(\alpha)$ be a $\mathcal{C}^{4, 2} $ billiard deformation in $\mathbb{R}^2$, and let \newline 
$\alpha \in [0, b]$. Assume that $\lambda_1(\alpha)$ is not continuous at $\alpha = 0$.  Then there exists $\varepsilon >0$ and a sequence $ \alpha_1 > \alpha_2 > ... > \alpha_{p} > ... \to 0$ in $[0, b]$ with $\alpha_p \to 0$ such that  $ | \lambda_1 ^ m (\alpha_k) - \lambda_1 ^ m (0)  | \geq \varepsilon $ for all $p \geq 1$.




\bigskip

\noindent  By using Lemma \ref{alpha_tilde{k}} and the previous expressions of $\lambda_1^{m}(\alpha)$ for $\alpha = \alpha_p$ and $\lambda_1^{m}(0)$ in (\ref{lambda_k}), we have


\begin{align*}
 \Bigg | \lambda_1 ^ m (\alpha_{p}) - \lambda_1 ^ m (0) \Bigg | &= \Bigg | \frac{1}{m} \sum_{j=1}^{m}(\log \delta_j (\alpha_{p})- \log\delta_j (0)) \Bigg | \\
&= \Bigg | \frac{-1}{m} \sum_{j=1}^{m}( \log (1+ d_j(\alpha_{p}) k_j(\alpha_{p})) - \log(1+ d_j(0) k_j(0))) \Bigg | \\
&\leq  \frac{1}{m} \sum_{j=1}^{m}  \Bigg |  \log (1+ d_j(\alpha_{p}) k_j(\alpha_{p})) - \log(1+ d_j(0) k_j(0))  \Bigg | \\ 
&\leq  \frac{1}{m} \sum_{j=1}^{m}  \Bigg |  \frac {1+ d_j(\alpha_{p}) k_j(\alpha_{p})- (1+ d_j(0)  k_j(0)) } {1+ \min\{d_j (\alpha_{p}) k_j (\alpha_{p}), d_j (0) k_j (0)\}} \Bigg | \\
&=  \frac{1}{m} \sum_{j=1}^{m}  \Bigg |  \frac { d_j(\alpha_{p}) k_j(\alpha_{p})-  d_j(0)  k_j (0)} {1+ d_{\min}  k_{\min} } \Bigg | \\
&=  \frac{1}{m} \, {C}_0 \sum_{j=1}^{m}  \Bigg | d_j(\alpha_{p}) k_j(\alpha_{p})- d_j (0) k_j (0)\Bigg | \\
&=  \frac{1}{m} \, {C}_0 \sum_{j=1}^{m}  \Bigg | (d_j(\alpha_{p}) - d_j (0)) k_j(\alpha_{p}) + d_j(0) (k_j (\alpha_{p} ) - k_j(0))\Bigg |, 
\end{align*}

\noindent where $C_0 = \frac{1}{1+d_{\min} k_{\min}} >0$ is a global constant independent of $\alpha_{p}$.

\bigskip 

\noindent Fix a small $\delta >0$; we will state later how small $\delta >0$ should be. Next consider $p$ sufficiently large so that $\alpha_p < \delta$. For all $ p$, we have $| k_j (\alpha_{p} ) - k_j(0)| = \alpha_{p}  |\dot{k}_j(s(\alpha_{p} ))|$ and $| d_j (\alpha_{p} ) - d_j(0)| = \alpha_{p}  |\dot{d}_j(r(\alpha_{p} ))|$, for some $s(\alpha_{p} ), r(\alpha_{p} ) \in [0,\alpha_p] $. From Corollaries \ref{distance} and \ref{k_j} , there exist constants $C_k$ and $C_d$ such that $|\dot{k}_j(s(\alpha_p))|  \leq C_k$ and $|\dot{d}_j(s(\alpha_{p} ))|  \leq C_d$. Therefore for all $j$, \newline $| k_j(\alpha_{p} ) - k_j(0)|  \leq \alpha_{p}  C_k < \delta C_k$, and $| d_j (\alpha_{p} ) - d_j(0)| \leq \alpha_{p}  C_d < \delta C_d $. Then



\begin{equation*}
\begin{aligned}
 \Big| \lambda_1 ^ m (\alpha_{p} ) - \lambda_1 ^ m (0) \Big |  &\leq \frac{1}{m} \, {C}_0 \sum_{j=1}^{m}  \Big(  \Big| {d_j (\alpha_{p} ) - d_j (0)\Big|} k_j (\alpha_{p} ) + d_j (0)  \Big| {k_j (\alpha_{p}  ) - k_j(0)}   \Big| \Big) \\
&<  \frac{1}{m} \, {C}_0 \sum_{j=1}^{m} \delta (C_d k_{\max} + C_k d_{\max} ) \\
%
%
&= {C}_0  \delta (C_d k_{\max} + C_k d_{\max} ) <  \varepsilon,  \\
\end{aligned}
\end{equation*}

\bigskip 

\noindent if we take $\delta < \frac{\varepsilon}{C_d k_{\max} + C_k d_{\max} }$. We now have a contradiction because with the choice of the sequence $ \alpha_1 > \alpha_2 > ... > \alpha_{p} > ... \to 0$ in $[0, b]$. Therefore the statement is proved. 
\end{proof}


\section{Differentiability of the largest Lyapunov exponent}

Here we prove {\bf {Theorem \ref{diff.}}} 

%

\begin{proof}[Proof of {\rm {Theorem \ref{diff.}:}}]
We will prove differentiability at $\alpha=0$. From this differentiability at any $\alpha \in [0, b]$ follows. To prove the differentiability at $\alpha =0$, we have to show that there exists 
\begin{equation*}
\begin{aligned}
\lim_{\alpha \to 0}\frac {\lambda_1 (\alpha) - \lambda_1 (0) } {\alpha}.
\end{aligned}
\end{equation*}

\noindent Equivalently, there exists a number $F$ such that 

\begin{equation*}
\begin{aligned}
\lim_{p \to \infty}\frac {\lambda_1 (\alpha_p) - \lambda_1 (0) } {\alpha_p} = F,
\end{aligned}
\end{equation*}

\noindent for any sequence $\alpha_1 > \alpha_2 > ... > \alpha_{p} > ... \to 0$ as $p \to \infty $ in $[0, b]$.

 \bigskip 
 
\noindent Let $K(\alpha) \subset \mathbb{R}^2$  be a $\mathcal{C}^{5, 3}$ billiard deformation and $\alpha \in [0,b]$ for a positive number $b$. Let $\lambda_1  (\alpha)$ be the largest Lyapunov exponent for $K(\alpha)$ and $\lambda_1  (0)$ be the largest Lyapunov exponent for $K(0)$. 
\bigskip

 \noindent By using Lemma \ref{alpha_tilde{k}} and the expressions of $\lambda_1^{m}(\alpha)$ for $\alpha = \alpha_p$ and $\lambda_1^{m}(0)$ in (\ref{lambda_k}), we have $ \lambda_1 ^ {(m)} (\alpha_p) \to \lambda_1  (\alpha_p)$ and $ \lambda_1 ^ {(m)} (0) \to \lambda_1  (0)$ when $m \to \infty$. Also, 

\begin{equation*}
\begin{aligned}
\frac{ \lambda_1 ^ {(m)} (\alpha_p) - \lambda_1 ^ {(m)} (0)}{\alpha_p}  &= - \frac{1}{m} \sum_{j=1}^{m}\frac{\log \delta_j (\alpha_p)- \log\delta_j (0)}{\alpha_p}  \\
 &= - \frac{1}{m} \sum_{j=1}^{m}\frac{\log \big(1+d_j (\alpha_p) k_j(\alpha_p)\big)- \log \big(1+d_j (0) k_j(0)\big)}{\alpha_p}.  \\
 \end{aligned}
\end{equation*}

\noindent Set $f_j (\alpha_p) = \log \big (1+d_j (\alpha_p) k_j(\alpha_p) \big)$ and $f_j (0) = \log \big (1+d_j (0) k_j(0) \big)$. Then

\begin{equation*}
\begin{aligned}
 \frac{ \lambda_1 ^ {(m)} (\alpha_p) - \lambda_1 ^ {(m)} (0)}{\alpha_p} &= - \frac{1}{m} \sum_{j=1}^{m}\frac{ f_j (\alpha_p)- f_j (0)}{\alpha_p}.  \\
\end{aligned}
\end{equation*}

\noindent Taylor's formula gives

\begin{equation*}
\begin{aligned}
f_j(\alpha_p) &= f_j(0) + \alpha_p \dot{f}_j (0) + \frac{\alpha_p^2}{2} \ddot{f}_j(r_j(\alpha_p))
\end{aligned}
\end{equation*}
 
 \noindent for some $r_j(\alpha_p) \in [0, \alpha_p]$. Then 
 \begin{equation*}
\begin{aligned}
\frac{f_j(\alpha_p) - f_j(0)}{\alpha_p} - \dot{f}_j (0) &= \frac{\alpha_p}{2} \ddot{f}_j(r_j(\alpha_p)).
\end{aligned}
\end{equation*}
 
\noindent Let \, $$ F_m = \frac{1}{m} \sum_{j=1}^{m} \dot{f}_j(0).$$ 

\noindent Summing up the above for $j=1,2,..., m$, we get

\begin{equation*}
\begin{aligned}
\frac{ \lambda_1 ^ {(m)} (\alpha_p) - \lambda_1 ^ {(m)} (0)}{\alpha_p}  - F_m &= - \frac{1}{m} \sum_{j=1}^{m} \Big[\frac{ f_j (\alpha_p)- f_j (0)}{\alpha_p} - \dot{f}_j (0) \Big]. \\
\end{aligned}
\end{equation*}

\noindent From the definition of $f_j(\alpha_p)$,

\begin{equation*}
\begin{aligned}
\dot{f}_j(\alpha_p) &= \frac{\dot{d}_j(\alpha_p) k_j(\alpha_p) + d_j(\alpha_p) \dot{k}_j(\alpha_p) } {1+d_j(\alpha_p) k_j(\alpha_p) }, \\
\end{aligned}
\end{equation*} 

\noindent and therefore, 

\begin{equation*}
\begin{aligned}
\ddot{f}_j(\alpha_p) & = \frac{ \big(\ddot{d}_j(\alpha_p) k_j(\alpha_p) + 2 \dot{d}_j(\alpha_p) \dot{k}_j(\alpha_p) +d_j(\alpha_p) \ddot{k}_j(\alpha_p)\big) \big( 1+d_j(\alpha_p) k_j(\alpha_p) \big) } {\big(1+d_j(\alpha_p) k_j(\alpha_p) \big)^2}\\
&  - \frac{\big(\dot{d}_j(\alpha_p) k(\alpha_p) + d_j(\alpha_p) \dot{k}_j(\alpha_p)\big)^2 } {\big(1+d_j(\alpha_p) k_j(\alpha_p) \big)^2}.
\end{aligned}
\end{equation*} 

\noindent Then from Corollaries \ref{distance} and \ref{k_j}, we get  

\begin{equation*}
\begin{aligned}
\Big |\dot{f}_j(\alpha_p) \Big| & \leq \frac{C_d^{(1)} k_{\max} + d_{\max} C_k^{(1)} } {1+d_{\min} k_{\min} } = C_1, \\
\\
\Big |\ddot{f}_j(\alpha_p) \Big| & \leq  \frac{ \big(C^{(2)}_d k_{\max} + 2 C_d^{(1)} C^{(1)} _k +d_{\max} C^{(2)}_k \big) \big( 1+d_{\max} k_{\max} \big) } {\big(1+d_{\min} k_{\min} \big)^2} \\
&+ \frac {\big(C^{(1)}_d k_{\max} + d_{\max} C^{(1)}_k \big)^2 } {\big(1+d_{\min} k_{\min} \big)^2} = C_2. 
\end{aligned}
\end{equation*} 

\noindent Therefore $$ | \ddot{f} _j(r_j(\alpha_p)) | \leq C_2 , $$

\noindent for some constant $C_2 > 0$ independent of $r_j(\alpha_p)$ and $j$. This implies

\begin{equation*}
\begin{aligned}
\Big| \frac{ \lambda_1 ^ {(m)} (\alpha_p) - \lambda_1 ^ {(m)} (0)}{\alpha_p}  - F_m \Big| & \leq  \frac{1}{m} \sum_{j=1}^{m}\frac{\alpha_p }{2} \Big| \ddot{f}_j (t_j(\alpha_p))\Big|  
&\leq \frac{C_2}{2} \alpha_p.
\end{aligned}
\end{equation*}


\bigskip

\noindent Since $|\dot{f}_j(\alpha_p)| \leq C_1$, we have $|F_m| \leq \frac{1}{m} \sum_{j=1}^{m} |\dot{f}_j(0)| \leq C_1$, for all $m$. Therefore, the sequence $\{F_m\}$ has convergent subsequences. Let for example $ F_{m_h} \to {F}$, for some sub-sequence $\{m_h\}$. Then 

\begin{equation*}
\begin{aligned}
\Big | \frac{ \lambda^{(m_h)}_1  (\alpha_p) - \lambda^{(m_h)}_1  (0)}{\alpha_p}  - F_{m_h} \Big | &\leq \frac{C_2}{2} \alpha_p, 
\end{aligned}
\end{equation*}

\noindent for all $h\geq 1$. So, letting $h \to \infty $, we get 

\begin{equation*}
\begin{aligned}
\Big | \frac{ \lambda_1  (\alpha_p) - \lambda_1  (0)}{\alpha_p}  - {F} \Big | &\leq \frac{C_2}{2} \alpha_p, 
\end{aligned}
\end{equation*}

\noindent and letting $ \alpha_p \to 0$ as $p \to \infty$ we get that there exists 
\begin{equation*}
\begin{aligned}
\lim_{p \to \infty} \frac{ \lambda_1  (\alpha_p) - \lambda_1  (0)}{\alpha_p}  = F.  
\end{aligned}
\end{equation*} 

\noindent for every sequence $\alpha_1 > \alpha_2 > ... > \alpha_{p} > ... \to 0$ as $p \to \infty $ in $[0, b]$. Thus, there exists $ F = \lim_{m \to \infty} \frac{1}{m} \sum_{j=1}^{m} \dot{f}_j(0)$. This is true for every subsequence $\{m_h\}$, so for any subsequence we have $ F_{m_h} \to F$. Hence, $ F_{m}$ converges to $F$ as well. This implies that there exists

\begin{equation*}
\begin{aligned}
\lim_{\alpha \to 0} \frac{ \lambda_1  (\alpha) - \lambda_1  (0)}{\alpha}  = F,  
\end{aligned}
\end{equation*}

\noindent so $\lambda_1$ is differentiable at $\alpha =0$ and $\dot{\lambda}_1(0) =F$. 
\end{proof} 

\begin{cor}
Let $K(\alpha)$ be a $\mathcal{C}^{5, 3}$ billiard deformation. Then there exists a constant 
$C _{\lambda_1} > 0$ such that 

$$ \Big | \diff{\lambda_1(\alpha)}{\alpha} \Big | \leq C _{\lambda_1}, \, \, \, \, for\, all\, \alpha \in [0,b].$$

\end{cor}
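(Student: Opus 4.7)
The plan is to extract the bound essentially for free from the proof of Theorem \ref{diff.}. That proof shows that at $\alpha = 0$ the derivative exists and equals $\dot{\lambda}_1(0) = F = \lim_{m\to\infty} \frac{1}{m}\sum_{j=1}^m \dot{f}_j(0)$, where $f_j(\alpha) = \log(1 + d_j(\alpha)k_j(\alpha))$, and that the same argument applies at any point $\alpha_0 \in [0,b]$ rather than only at $0$ (this was remarked at the start of the proof). Consequently, for every $\alpha \in [0,b]$ one has $\dot{\lambda}_1(\alpha) = \lim_{m\to\infty} \frac{1}{m}\sum_{j=1}^m \dot{f}_j(\alpha)$.

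First, I would invoke the explicit formula for $\dot{f}_j(\alpha)$ computed in the proof of Theorem \ref{diff.},
\begin{equation*}
\dot{f}_j(\alpha) = \frac{\dot{d}_j(\alpha)k_j(\alpha) + d_j(\alpha)\dot{k}_j(\alpha)}{1 + d_j(\alpha)k_j(\alpha)},
\end{equation*}
and combine it with four uniform estimates already in hand: the a priori bounds $d_{\min} \leq d_j(\alpha) \leq d_{\max}$ and $k_{\min} \leq k_j(\alpha) \leq k_{\max}$ (from the end of Section \ref{1} and the geometry forced by condition (\textbf{H})), together with $|\dot{d}_j(\alpha)| \leq C_d^{(1)}$ (Corollary \ref{distance}) and $|\dot{k}_j(\alpha)| \leq C_k^{(1)}$ (Corollary \ref{k_j}). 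This yields the index- and parameter-independent bound
\begin{equation*}
|\dot{f}_j(\alpha)| \leq \frac{C_d^{(1)} k_{\max} + d_{\max} C_k^{(1)}}{1 + d_{\min} k_{\min}} =: C_1,
\end{equation*}
valid for all $j \geq 1$ and all $\alpha \in [0,b]$, which is exactly the constant $C_1$ that already appeared in the proof of Theorem \ref{diff.}.

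Taking the Cesàro average preserves the bound: $|\frac{1}{m}\sum_{j=1}^m \dot{f}_j(\alpha)| \leq C_1$ for every $m$ and every $\alpha$. Passing to the limit $m \to \infty$ and using the identification $\dot{\lambda}_1(\alpha) = \lim_m \frac{1}{m}\sum_{j=1}^m \dot{f}_j(\alpha)$ gives $|\dot{\lambda}_1(\alpha)| \leq C_1$ uniformly on $[0,b]$, and we may take $C_{\lambda_1} = C_1$.

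I do not anticipate a genuine obstacle: all the heavy lifting (existence of $\dot{\lambda}_1$, the identification of its value as the Cesàro limit of $\dot{f}_j$, and the uniform control of the first derivatives of $d_j$ and $k_j$ across $\alpha$ and $j$) has already been done. The only point worth checking carefully is that the constant $C_1$ does not secretly depend on $\alpha$ through the base point used to define $W^u_\epsilon(x_0(\alpha))$; but since $C_1$ only involves the global quantities $d_{\min}, d_{\max}, k_{\min}, k_{\max}, C_d^{(1)}, C_k^{(1)}$, which are attributes of the deformation $K(\alpha)$ itself and not of a chosen trajectory, the uniformity in $\alpha$ is automatic.
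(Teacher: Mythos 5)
Your proposal is correct and follows essentially the same route as the paper: both identify $\dot{\lambda}_1(\alpha)$ with $\lim_{m\to\infty}\frac{1}{m}\sum_{j=1}^m \dot{f}_j(\alpha)$ from the proof of Theorem \ref{diff.} and then bound each term via Corollaries \ref{distance} and \ref{k_j} together with the global bounds on $d_j$ and $k_j$, arriving at the identical constant $C_{\lambda_1} = \frac{C_d^{(1)}k_{\max}+d_{\max}C_k^{(1)}}{1+d_{\min}k_{\min}}$. Your added remark on the $\alpha$-uniformity of the constant is a sensible sanity check that the paper leaves implicit.
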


\begin{proof}
We have

\begin{equation*}
\begin{aligned}
\lambda_1 (\alpha) &= \lim_{m\to \infty} \frac{1}{m} \sum_{j=1}^{m} \log(1+d_j (\alpha) k_j (\alpha)).\\
\end{aligned}
\end{equation*}

\noindent By Theorem \ref{diff.}, $\lambda_1(\alpha)$ is $\mathcal{C}^1$. So, from the formula 
 in the previous proof that \newline
 $ \dot{\lambda}_1 (0) = \lim_{m \to \infty} \frac{1}{m} \sum_{j=1}^{m} \dot{f}_j(0)$, we have 

\begin{equation*}
\begin{aligned}
\diff{\lambda_1}{\alpha} &= \lim_{m\to \infty} \frac{1}{m} \sum_{j=1}^{m} \frac{\diff{d_j}{\alpha} k_j(\alpha) + d_j(\alpha) \diff{k_j}{\alpha}} {1+d_j(\alpha) k_j (\alpha)}. 
\end{aligned}
\end{equation*}

\noindent From Corollaries \ref{distance} and \ref{k_j}, there exist constants $C_d^{(1)}, C_k^{(1)} >0 $ such that \newline
$ \Big | \diff{d_j} {\alpha} \Big | \leq C_d^{(1)}$ and $ \Big | \diff{k_j} {\alpha} \Big | \leq C_k^{(1)} $. Then, we have

\begin{equation*}
\begin{aligned}
\Big |\diff{\lambda_1}{\alpha} \Big |& \leq  \lim_{m\to \infty} \frac{1}{m} \sum_{j=1}^{m} \frac{C_d^{(1)} k_{\max} + d_{\max} C_k^{(1)}} {1+d_{\min} k_{\min}}\\ 
&= \frac{C_d^{(1)} k_{\max} + d_{\max} C_k^{(1)}} {1+d_{\min} k_{\min}}\\
&= C _{\lambda_1}. 
\end{aligned}
\end{equation*}

\noindent This proves the statement. 
\end{proof}

\bigskip

\section*{Acknowledgment}
The author would like to thank Prof. Luchezar Stoyanov for his suggestions, comments, and help. This work was supported by a scholarship from Najran University, Saudi Arabia.  


\begin{thebibliography}{1}




\bibitem {Ba} L. Barreira and Ya. Pesin,  {\em Lyapunov exponents and smooth ergodic theory}. 
Univ. Lect. Series 23, American Mathematical Society,  Providence,  RI, 2001.

\bibitem {B}  R. Bowen, {\em Symbolic dynamics for hyperbolic flows}. Amer. J. Math.
{\bf 95} (1973), 429-460.



\bibitem {DK} P. Duarte, S. Klein and M.  Poletti, {\em Hölder continuity of the Lyapunov exponents of linear cocycles over hyperbolic maps}. Math. Z. {\bf 302} (2022), 2285–2325. 






\bibitem {Ch2} N. Chernov, {\em Entropy, Lyapunov exponents, and mean free path for billiards}. Journal of Statistical Physics, {\bf 88} (1997), 1-29.


\bibitem {ChM} N. Chernov and R. Markarian, {\em Chaotic Billiards}. Math. Surveys and Monographs
Vol. 127,  Amer. Math. Soc. 2006.


\bibitem {I} M. Ikawa, {\em Decay of solutions of the wave equation in 
the exterior of several strictly convex bodies}. Ann. Inst. Fourier {\bf 38} (1988), 113-146.





\bibitem {KS} A. Katok and J. M. Strelcyn, {\em Invariant Manifolds, Entropy and Billiards;
Smooth Maps with Singularities}. Lecture Notes in Mathematics {\bf 1222}, Springer, 1986.


\bibitem {LM} A. Lopes and R. Markarian, {\em Open billiards: invariant and conditionally 
invariant probabilities on Cantor sets}. SIAM J. Appl. Math. {\bf 56} (1996), 651-680.


\bibitem {Mar1} R. Markarian, {\em Billiards with Pesin Region of Measure one}. Comm. in Math Phys. {\bf118} (1988), 87-97.

\bibitem {Mar2} R. Markarian, {\em New ergodic Billiards: exact results}. Nonlinearity 6. (1993), 819-841


\bibitem {Mor} T. Morita, {\em The symbolic representation of billiards without boundary condition}. Trans. Amer. Math. Soc. {\bf 325} (1991), 819-828. 

\bibitem {Os} V. I. Oseledets, {\em A multiplicative ergodic theorem. Lyapunov
characteristic numbers for dynamical systems}. Trans. Moscow Math. Soc. {\bf 19} (1968), 197-221.


\bibitem {Po} {M. Pollicott}, {\em Lectures on ergodic theory and Pesin theory on compact manifolds}. Cambridge
Univ. Press, Cambridge 1993.

\bibitem {PS} V. Petkov and L. Stoyanov, {\em Geometry of Reflecting Rays and Inverse Spectral Problems}. Wiley, Chichester, (1992).

 
\bibitem {Si1} Ya. Sinai, {\em Dynamical systems with elastic
reflections}.  Russian Math. Surveys {\bf 25} (1970), 137-190. 

\bibitem {Si2} Ya. Sinai, {\em Development of Krylov's ideas},
An addendum to: N.S.Krylov {\em "Works on the foundations of
statistical physics"}.  Princeton Univ. Press, Princeton 1979, 239-281.




\bibitem {St1} L. Stoyanov, {\em Exponential instability and entropy for a class 
of dispersing billiards}. {Ergod. Th. \& Dynam. Sys.} {\bf 19} (1999), 201-226.


\bibitem {St2} {L. Stoyanov,} {\em Spectrum of the Ruelle  operator and exponential decay of correlation for open billiard 
flows}.  Amer. J. Math. {\bf 123} (2001), 715-759.

\bibitem {St3} {L. Stoyanov}, {\em Non-integrability of open billiard flows and Dolgopyat-type estimates}.
 Ergodic Th. \& Dyn. Systems {\bf 32} (2012), 295-313.
 
 \bibitem {V} M. Viana, {\em Lectures on Lyapunov exponents}, Cambridge Studies in Adv. Math. vol.145, Cambridge Univ. Press 2014.  
 



\bibitem {W2} P. Wright, {\em Differentiability of the Hausdorff dimension of the non-wandering set in a planar open billiard},
Discrete \& Continuous Dynamical Systems {\bf 36}(7) (2016), 3993-4014.



\bibitem {Wo} M, P. Wojtkowski, {\em Principles for the design of billiards with nonvanishing Lyapunov
exponents}. Commun. Math. Phys. {\bf 105} (1986), 391-414.



\end{thebibliography}




\end{document}